\newenvironment{psmallmatrix}
  {\left(\begin{smallmatrix}}
  {\end{smallmatrix}\right)}
\newcommand{\BZ}{{\mathbb Z}}
\newcommand{\sC}{{M}}
\newcommand{\fm}{{\mathfrak m}}
\newcommand{\fn}{{\mathfrak n}}
\newcommand{\sk}{\mathsf k}
\newcommand{\sff}{f}
\newcommand{\sg}{g}
\newcommand{\bsa}{{\boldsymbol a}}
\newcommand{\bsb}{{\boldsymbol b}}
\newcommand{\im}{\operatorname{Im}}
\newcommand{\rank}{\operatorname{rank}}
\newcommand{\HH}[2]{\operatorname{H}_{#1}(#2)}
\newcommand{\sgn}{\operatorname{sgn}}
\newcommand{\Ker}{\operatorname{Ker}}
\newcommand{\depth}[1]{\operatorname{depth}{#1}}
\newcommand{\Hom}[3]{\operatorname{Hom}_{#1}({#2},{#3})}
\newcommand{\edim}{\operatorname{edim}}
\newcommand{\codepth}{\operatorname{codepth}}
\newcommand{\xra}{\xrightarrow}
\newcommand{\Cone}{\operatorname{Cone}}
\theoremstyle{plain}
\newtheorem{theorem}{Theorem}[section]
\newtheorem{lemma}[theorem]{Lemma}
\newtheorem{proposition}[theorem]{Proposition}
\newtheorem*{mainthm}{Theorem A}
\newtheorem*{introcor}{Corollary B}
\theoremstyle{definition}
\newtheorem{remark}[theorem]{Remark}
\newtheorem{definition}[theorem]{Definition}
\newtheorem{construction}[theorem]{Construction}
\newtheorem{setup}[theorem]{Setup}
\newtheorem{example}[theorem]{Example}
\numberwithin{equation}{theorem}
\theoremstyle{remark}
\begin{document}

\title[Iterated Mapping Cones on the Koszul Complex]{Iterated Mapping Cones on the Koszul Complex \\ and Their Application to Complete Intersection Rings}

\author[V.~C.~Nguyen]{Van C.~Nguyen}
\address{Department of Mathematics, United States Naval Academy, Annapolis, MD 21402, U.S.A.}
\email{vnguyen@usna.edu}
\author[O.~Veliche]{Oana Veliche}
\address{Department of Mathematics, Northeastern University, Boston, MA 02115, U.S.A.}
\email{o.veliche@northeastern.edu}

\date{\today} 
\keywords{complete intersection, Koszul complex, mapping cone, Tor algebra, minimal free resolution}

\subjclass[2020]{13D02, 13D07, 13H10, 18G10, 18G35.}

\begin{abstract}
Let $(R,\fm,\sk)$ be a complete intersection local ring, $K$ be the Koszul complex on a minimal set of generators of $\fm$, and $A=\HH{}{K}$ be its homology algebra. We establish exact sequences involving direct sums of the components of $A$ and express the images of the maps of these sequences as homologies of iterated mapping cones built on $K$. As an application of this iterated mapping cone construction, we recover a minimal free resolution of the residue field $\sk$ over $R$, independent from the well-known resolution constructed by Tate by adjoining variables and killing cycles. Through our construction, the differential maps can be expressed explicitly as blocks of matrices, arranged in some combinatorial patterns. 
\end{abstract}

\maketitle
\thispagestyle{empty}

\section{Introduction}
\label{sec:Introduction}

Let $(R,\fm, \sk)$ be a local ring with maximal ideal $\fm$, residue field $\sk:= R/\fm$, and codepth $c \geq 1$.  Let $(K,\partial)$ be the Koszul complex on a minimal set of generators of $\fm$  and let $A:=\HH{}{K}= \bigoplus_{0 \leq i \leq c} A_i$ be its homology algebra. In \cite{NV}, the authors analyzed the product structure of the graded-commutative $\sk$-algebra $A$ in lower degrees and used the components of $K$ as building blocks to construct a minimal free resolution $(F,\partial^F)$ of $\sk$ over $R$ up to homological degree five. The higher degrees of this resolution can be extended similarly for some special cases of $R$, but in general it requires an understanding of higher Massey products, which remains elusive. 

For a complete intersection ring $R$, the Koszul homology algebra $A$ is completely determined as the exterior algebra of the first homology $A_1=\HH{1}{K}$, see e.g., \cite[Theorem 6]{T} and \cite[Theorem 2.7]{As}, and a minimal free resolution $(F,\partial^F)$ of $\sk$ was constructed by Tate \cite[Theorem 4]{T}, by adjoining variables of degrees 1 and 2 and killing cycles. In this note, for a complete intersection ring, we show how $A$ can be also used to describe the homology of another construction, namely, the iterated mapping cones $\{M^k\}_{k\geq 0}$ when applied to the Koszul complex $K$, see Construction~\ref{def cone}. We obtain the following main result:

\begin{mainthm}[Proposition \ref{prop:zeta c} and Theorem~\ref{homology c}]
Let $(R,\fm,\sk)$ be a complete intersection ring of codepth $c \geq 1$.   Then,
for any integer $k \geq 0$, there exists an exact sequence on the Koszul homology of the following form:
\begin{equation}
\label{exact sequence}
    0\to A_0^{\oplus{k+c\choose c-1}}\xrightarrow{[\zeta^{k}_1]}A_1^{\oplus{k+c-1\choose c-1}} \xrightarrow{[\zeta^{k-1}_2]}A_2^{\oplus{k+c-2\choose c-1}} \to \cdots \xrightarrow{[\zeta^{k-c+1}_{c}]}A_c^{\oplus{k\choose c-1}} \to  0,
\end{equation}
where the maps $\zeta^k_u$ are given in \eqref{zeta codepth c}. Moreover, the images of $[\zeta^k_u]$ from \eqref{exact sequence} give a complete description for the homology of the iterated mapping cone $\sC^k$, applied to the Koszul complex $K$ as described in Construction~\ref{def cone}, as follows:
\begin{align}
    \HH{}{\sC^{0}} &= A, \ \text{and} \notag\\ 
    \HH{i}{\sC^{k+1}} &\cong \begin{cases}\label{homology cone}
\HH{i}{\sC^{k}}, & \quad 0\leq i \leq 2k \text{ or } i \geq 2k+3+c \\ 
0, & \quad i=2k+1 \text{ or } i=2k+2 \\
\im([\zeta_u^{k+1}]), & \quad \text{for all}\  i=2k+2+u,\ \text{where}\ 1\leq u \leq c.
\end{cases}
\end{align}
\end{mainthm}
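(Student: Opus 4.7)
The plan is to prove the two assertions of the theorem simultaneously, by induction on $k$, since the description of $\HH{}{\sC^k}$ in \eqref{homology cone} is phrased in terms of the images of the maps $[\zeta^k_u]$, while exactness of \eqref{exact sequence} is most naturally controlled using what has already been established about $\HH{}{\sC^k}$. The base case $k=0$ is essentially automatic: Construction~\ref{def cone} gives $\sC^0 = K$, so $\HH{}{\sC^0}=A$, and at $k=0$ the sequence \eqref{exact sequence} collapses to a short exact sequence encoding the equality $\dim_\sk A_1 = c$.

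The first step is a conceptual identification of \eqref{exact sequence}. For a complete intersection of codepth $c$, $A$ is an exterior $\sk$-algebra on $A_1$ with $\dim_\sk A_1 = c$ (cf.\ \cite[Theorem~6]{T}, \cite[Theorem~2.7]{As}). A binomial identity gives
\begin{equation*}
    \binom{c}{u}\binom{k+c-u}{c-1} \;=\; \dim_\sk\bigl(\Lambda^u(A_1)\otimes_\sk \mathrm{Sym}^{k+1-u}(A_1)\bigr),
\end{equation*}
so the ranks in \eqref{exact sequence} match the degree-$(k+1)$ strand of the classical Koszul complex $\Lambda^\bullet(A_1)\otimes \mathrm{Sym}^\bullet(A_1)$, which is exact. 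The content of the exactness claim is therefore to verify that the maps $[\zeta^k_u]$ of \eqref{zeta codepth c} realize this Koszul differential on Koszul classes; this reduces to an explicit computation using the formulas for $\zeta^k_u$ together with the exterior multiplication on $A$.

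To propagate homology from $\sC^k$ to $\sC^{k+1}$, I would express $\sC^{k+1}$ as a mapping cone $\Cone(\varphi^{k+1}\colon N^{k+1}\to \sC^k)$ per Construction~\ref{def cone} and invoke the associated long exact sequence
\begin{equation*}
    \cdots \to \HH{i}{N^{k+1}} \to \HH{i}{\sC^k} \to \HH{i}{\sC^{k+1}} \to \HH{i-1}{N^{k+1}} \to \cdots .
\end{equation*}
By the inductive description of $\HH{i}{\sC^k}$, and the fact that $N^{k+1}$ is concentrated in a narrow range of degrees, this long exact sequence decouples into short segments in each $i$. The invariance $\HH{i}{\sC^{k+1}}\cong \HH{i}{\sC^k}$ for $0\leq i\leq 2k$ or $i\geq 2k+3+c$ holds in the degrees where $N^{k+1}$ contributes trivially, while the vanishing $\HH{2k+1}{\sC^{k+1}}=\HH{2k+2}{\sC^{k+1}}=0$ amounts to showing that the induced map on homology surjects onto $\im([\zeta_1^k])=\HH{2k+1}{\sC^k}$ and that the resulting kernel is killed one degree higher.

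The main obstacle, and the heart of the argument, is the identification $\HH{2k+2+u}{\sC^{k+1}}\cong \im([\zeta^{k+1}_u])$ for $1\leq u\leq c$. This requires a careful trace of the connecting homomorphism of the cone long exact sequence on explicit generators, recognition of the resulting coefficient matrices as the matrices $\zeta^{k+1}_u$ defined in \eqref{zeta codepth c}, and a combinatorial match between the $\binom{k+c-u}{c-1}$ new cycles introduced at stage $k+1$ and the Koszul pattern identified above. Once this identification is in place, the induction closes: exactness of \eqref{exact sequence} at level $k+1$ is implied by exactness of the underlying Koszul complex $\Lambda^\bullet(A_1)\otimes \mathrm{Sym}^\bullet(A_1)$, and the formulas for $\HH{}{\sC^{k+1}}$ reproduce precisely the images appearing in that exact sequence.
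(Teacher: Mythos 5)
Your two ingredients --- a Koszul-strand identification for the exactness of \eqref{exact sequence}, and the mapping cone long exact sequence for the homology of $\sC^{k}$ --- are precisely those the paper uses, so the overall route is sound. Your Koszul-strand observation is in fact the alternative argument sketched in Appendix~\ref{appendix}, phrased there via divided powers $D^\bullet(V)$ and the dual symmetric algebra, which over the residue field gives the same dimension count as your $\Lambda^\bullet(A_1)\otimes\mathrm{Sym}^\bullet(A_1)$; what remains to be checked, and the appendix does check, is that the maps $[\zeta^k_u]$ really implement the Koszul differential, since matching ranks together with the complex property from Lemma~\ref{zeta chain c}(b) is not by itself conclusive. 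The main text proves Proposition~\ref{prop:zeta c} by a more elementary route: it takes an arbitrary cycle in $\Ker([\zeta^{k-u}_{u+1}])$, writes its coordinates with respect to the exterior-algebra basis of $A$, defines lifted coefficients as in \eqref{def a}, and verifies the identity \eqref{iden a} to exhibit an explicit preimage. Both routes work; yours is conceptually cleaner, the paper's is self-contained and avoids invoking exactness of the Koszul strand.

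Two points in your proposal need tightening. First, the ``simultaneous induction'' framing is unnecessary and a little misleading. Exactness of \eqref{exact sequence} is a statement about $A$ and the $\zeta$-maps entirely independent of the cone construction, and the paper establishes it once (Proposition~\ref{prop:zeta c}) before any reference to $\sC^{k}$; your own Koszul-strand argument likewise gives it for all $k$ at once. Only the homology description needs the induction, and your sentence that exactness is ``controlled using what has already been established about $\HH{}{\sC^k}$'' has the logical dependence backwards. Second, the heart of the matter --- that $\HH{2k+2+u}{\sC^{k+1}}\cong\im([\zeta^{k+1}_u])$ --- is gestured at but not carried out. The paper handles this differently and more systematically than by ``tracing the connecting homomorphism on explicit generators'': Setup~\ref{setup codepth c} observes that, because $\varphi^{j}=\Sigma^{j-1}\zeta^{j}$, the columns of Figure~\ref{fig c} are the exact sequences of Proposition~\ref{prop:zeta c} and the triangles commute, and Lemma~\ref{imzeta} then runs a Snake Lemma argument on the resulting grid of exact rows and columns to obtain simultaneously the isomorphism $\HH{2k+u}{\sC^{k+1}}\cong\im([\zeta^{k+1}_{u-2}])$ and the splitting $\HH{i}{\sff^{k+1}}=0$ for all $i$. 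That diagram chase is the step your proposal would need to supply.
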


The mapping cone can be viewed as a way to ``glue" two complexes together to produce another complex. Here, we apply the iterated mapping cone construction on complexes built from the Koszul complex $K$ (see Construction~\ref{def cone}) and describe the homology of these mapping cones using the maps $[\zeta^k_u]$. Note that our iterated mapping cone construction is different from that being used in the literature to compute resolution of certain (monomial) ideals, see e.g., \cite{CE,HT,S}.

An understanding of the images of the maps $[\zeta^k_u]$ leads us to recover the minimal free resolution $(F,\partial^F)$ of the residue field $\sk$ over any complete intersection ring, and we describe it explicitly using a finite set of data from $K$ and $A_1$, see Section~\ref{subsec:resolution}.  Such a resolution was constructed by Tate \cite[Theorem 4]{T} 
and was well studied by several authors, see e.g., Tate \cite[proof of Theorem 6]{T}, Assmus \cite[Theorem 2.7]{As}, Avramov \cite[\S6]{Av1}, Guliksen and Levin \cite[Proposition 1.5.4]{GL}, Eisenbud \cite[Example on page 42]{E}, Herzog and Martsinkovsky \cite[\S 3]{HM}, and Avramov, Henriques, and Sega \cite[Theorem 2.5]{AHS}. The minimal free resolutions of \emph{arbitrary} finitely generated modules over complete intersections were studied by Eisenbud and Peeva in \cite{EP} by using higher matrix factorizations. See also \cite[\S1]{EP} for an overview of recent developments in minimal free resolutions. 

As a consequence of Theorem A, we obtain the following minimal free resolution of $\sk$. This resolution is derived independently from the Tate construction, as the limit of the mapping cones $\{M^k\}_{k\geq 0}$. We describe the connection between the two constructions in Appendix~\ref{appendix}. 

\begin{introcor}[Theorem~\ref{ci codepth c} and Section~\ref{subsec:DGA}] 
Let $(R,\fm,\sk)$ be a complete intersection ring of codepth $c \geq 1$. Using the notation from \eqref{zeta codepth c}, a minimal free resolution $(F,\partial^F)$ of the residue field $\sk$ over $R$ is given by: 
    \begin{align*}
     F_i&=K_i^{\oplus{ c-1 \choose c-1}} \oplus K^{\oplus{c \choose c-1}}_{i-2}\oplus K^{\oplus{c+1 \choose c-1}}_{i-4}\oplus\cdots \oplus K^{\oplus{c+j-1\choose c-1}}_{i-2j}\oplus\cdots\quad \text{and}\\
 \partial_i^F&=\begin{pmatrix}
 \partial^{\oplus{c-1 \choose c-1}}_i & \zeta^0_{i-1}& 0                &           0       &0&\cdots\\[0.2cm]
           0  & \partial_{i-2}^{\oplus{c \choose c-1}} &\zeta^{1}_{i-3}&           0       &0&\cdots\\[0.2cm]
           0  &                 0 & \partial_{i-4}^{\oplus{c+1 \choose c-1}} &\zeta^2_{i-5}&0&\cdots\\[0.2cm]
      \vdots  &           \vdots  &          \vdots  &           \ddots  &\ddots  &
 \end{pmatrix},
 \end{align*}
for all $i\geq 0$. Moreover, $F$ has a differential graded (DG) algebra structure induced from that of the Koszul complex $(K,\partial)$.
\end{introcor}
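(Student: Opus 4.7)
The plan is to realize $F$ as the colimit $\varinjlim_k \sC^k$ of the iterated mapping cones from Construction~\ref{def cone}, and then read off its shape, acyclicity, minimality, and multiplicative structure from Theorem A and the DG algebra structure of the Koszul complex $K$.

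First I would show that $F$ is a free resolution of $\sk$. By Theorem A, the natural inclusion $\sC^k \hookrightarrow \sC^{k+1}$ is an isomorphism on homology in degrees $i \leq 2k$, while $\HH{2k+1}{\sC^{k+1}} = \HH{2k+2}{\sC^{k+1}} = 0$. Hence for any fixed $i \geq 1$, the sequence $\HH{i}{\sC^k}$ eventually stabilizes at $0$, and $\HH{0}{\sC^k} = \HH{0}{K} = \sk$ for every $k$. Passing to the colimit gives $\HH{i}{F}=0$ for $i \geq 1$ and $\HH{0}{F}=\sk$. The explicit block form of $F$ is then obtained by unraveling Construction~\ref{def cone}: at stage $j$ one glues in a copy of the Koszul complex shifted by $2j$ and of multiplicity $\binom{c+j-1}{c-1}$, whose attaching map into the previously built complex is precisely the matrix of cycle lifts $\zeta^{j-1}$ appearing in \eqref{zeta codepth c}. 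This produces the block-triangular differential written in the statement.

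Second, for minimality I would verify that every entry of $\partial^F$ lies in $\fm$. The diagonal blocks are direct sums of the Koszul differential $\partial$, whose entries are the chosen minimal generators of $\fm$. The superdiagonal entries $\zeta^j_u$ are, by construction in \eqref{zeta codepth c}, cycle representatives of classes in $A_{\geq 1}$; since any minimal Koszul cycle of positive homological degree has all coefficients in $\fm$, the entries of $\zeta^j_u$ lie in $\fm$ as well. Thus $\partial^F(F) \subseteq \fm F$, and the resolution is minimal.

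Third, for the DG algebra structure I would extend the exterior DG structure of $K$ to $F$ blockwise. Because $R$ is a complete intersection, the homology algebra $A = \HH{}{K}$ is the exterior algebra $\bigwedge_{\sk} A_1$ by \cite[Theorem 6]{T} or \cite[Theorem 2.7]{As}, so the products of the cycles $\zeta^j_u$ making up the superdiagonal are already prescribed at the homology level. One then defines a multiplication on $F$ inductively in the cone stage $j$, so that on each $K$-block it restricts to the Koszul product and on the new generators it matches the exterior relations in $A$, verifying the Leibniz rule against $\partial^F$ and associativity from the graded-commutative relations. A cleaner alternative is to identify $F$ with Tate's resolution \cite[Theorem 4]{T}, which carries a canonical DG algebra structure; this comparison is the content of Appendix~\ref{appendix}. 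The main obstacle is precisely this third step: the iterated mapping cone carries no a priori multiplicative structure, so compatibility of the proposed DG product with the block differential, the signs generated by the shifts $2j$, and the nested attaching maps $\zeta^j_u$ requires careful bookkeeping; invoking the comparison with Tate's construction in the appendix avoids this combinatorial burden.
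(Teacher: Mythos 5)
Your treatment of acyclicity (colimit of the $\sC^k$ plus Theorem~\ref{homology c}) and of minimality (Koszul entries and the coordinates of the $z_j$ all lie in $\fm$, since $\Ker\partial_1\subseteq\fm K_1$) follows the paper's argument essentially verbatim; the only cosmetic difference is that the paper fixes one finite stage $k=\lceil i/2\rceil+1$ at which $F_{i\pm1}=\sC^k_{i\pm1}$ rather than passing homology through the colimit, which avoids invoking exactness of filtered colimits.

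The DG-algebra sketch, however, contains a conceptual error. You propose to make the multiplication on the new degree-$2$ generators ``match the exterior relations in $A$,'' but the degree-$2$ classes that $\zeta^0$ kills are \emph{even}, and in the resolution they multiply by \emph{symmetric} (in fact divided-power) relations, not exterior ones: in the paper's blockwise product the tuple positions are concatenated and reordered non-decreasingly, so the generator sitting in position $\ell$ squares to the generator in position $\ell\ell$ rather than to zero. The exterior structure of $A=\bigwedge A_1$ enters only in showing $\zeta^k\circ\Sigma\zeta^{k+1}=0$ (Lemma~\ref{zeta chain c}(b)), i.e.\ in the differential, not in the product on $F$. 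Deferring to Appendix~\ref{appendix} does not repair this: the appendix only records the dictionary to $K\otimes_R D^\bullet(V)$ and explicitly leaves the alternate proof as something one ``could continue'' to do. If you want to complete this step you should replace ``exterior relations'' by the divided-power rule on the tuple indices, exactly as in the paper's Section~\ref{subsec:DGA}, and then verify Leibniz by comparing the two lists of (element, position) pairs coming from $\partial^F(\bsa)\bsb+(-1)^i\bsa\,\partial^F(\bsb)$ and from $\partial^F(\bsa\bsb)$.
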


Our approach gives a different perspective on the minimal free resolution $(F,\partial^F)$, by highlighting the role of the Koszul homology $A$. First, using \eqref{homology cone} from Theorem A, we show a direct connection between the algebraic structure of $A$ and the construction of $(F,\partial^F)$. Second, our construction is done by fixing certain bases in the involved free components, empowering a block visualization of the differential maps, see Section~\ref{subsec:examples}.

The paper is organized as follows. In Section~\ref{sec:Koszul}, we study the Koszul homology over a complete intersection and obtain exact sequences of Koszul homology in Proposition \ref{prop:zeta c}.
For any commutative ring $R$, in Construction~\ref{def cone}, we describe an iterated mapping cone on a sequence of chain maps between bounded-below complexes of projective $R$-modules. In Section~\ref{subsec:homology}, we apply this iterated mapping cone construction to the Koszul complex of a complete intersection ring and describe the homology of these mapping cones. We show the rows of the induced long exact sequences in Figure~\ref{fig c} split; see Lemma~\ref{imzeta} and Theorem~\ref{homology c}. As an application, in Section~\ref{subsec:resolution} we give a minimal free resolution of $\sk$ over a complete intersection ring and describe its DG-algebra structure in Section~\ref{subsec:DGA}. Finally, in Section~\ref{subsec:examples}, when $R=\sk[x,y,z]_{(x,y,z)}/I$ is a complete intersection ring of embedding dimension three, we provide Examples \ref{exp:codepth2 edim3} and \ref{exp:codepth3 edim3} of codepth 2 and codepth 3, respectively, to illustrate this resolution. We describe the differential maps $\partial_i^F$ as block matrices and observe the patterns for $\partial_{\text{even}}^F$ and $\partial_{\text{odd}}^F$ using  these visual presentations. \\

We provide in Figure~\ref{fig c} the full picture of the long exact sequences of homology induced by \eqref{cone diagram}. By Lemma~\ref{imzeta}, the rows of these long exact sequences split. 
\begin{landscape}
\begin{figure}[h]
 \caption{Long exact sequences of homology}
 \label{fig c}
 \centering
\xymatrixrowsep{3.3pc}
 \xymatrixcolsep{4.1pc}
\small{ \xymatrix
 {
 \vdots \ar@{->}[d] &&&&&&\\
 A_{u-2}^{\oplus{{j+c+1}\choose c-1}} 
 \ar@{->}[d]_{[\zeta_{u-1}^{j+1}]} 
 &&& 
 \vdots\ar@{->}[d]
 &&&\\
 A_{u-1}^{\oplus{{j+c}\choose c-1}} 
 \ar@{->}[d]_{[\zeta_u^{j}]} 
 \ar@{->}[r]^{\HH{2j+u}{\psi^{j+1}}}  &
 \HH{2j+u}{\sC^j} 
 \ar@{->}[r]^{\HH{2j+u}{\sff^j}}
 \ar@{.>}[ld]_{\HH{2j+u}{\sg^{j}}}   & 
 \HH{2j+u}{\sC^{j+1}}
 \ar@{->}[r]^{\HH{2j+u}{\sg^{j+1}}} &
 A_{u-2}^{\oplus{{j+c}\choose c-1}} 
 \ar@{->}[d]_{[\zeta_{u-1}^{j}]} 
 \ar@{->}[r]^{\HH{2j+u-1}{\psi^{j+1}}} \ar@{.>}[ld]_{\HH{2j+u-1}{\psi^{j+1}}} &
 \HH{2j+u-1}{\sC^j}
 \ar@{->}[r] ^{\HH{2j+u-1}{\sff^j}}
 \ar@{.>}[ld]_{\HH{2j+u-1}{\sg^{j}}} &
 \HH{2j+u-1}{\sC^{j+1}} \ar@{->}[r] &
 \vdots \ar@{->}[d] \\
 A_u^{\oplus{{j+c-1}\choose c-1}} 
 \ar@{->}[d] _{[\zeta_{u+1}^{j-1}]} 
 \ar@{->}[r]^{\HH{2j+u-1}{\psi^{j}}} &
 \HH{2j+u-1}{\sC^{j-1}} 
 \ar@{->}[r]^{\HH{2j+u-1}{\sff^{j-1}}} 
 \ar@{.>}[ld]_{\HH{2j+u-1}{\sg^{j-1}}}  & 
 \HH{2j+u-1}{\sC^{j}} 
 \ar@{->}[r]^{\HH{2j+u-1}{\sg^{j}}}  &
 A_{u-1}^{\oplus{{j+c-1}\choose c-1}} 
 \ar@{->}[d]_{[\zeta_u^{j-1}]} 
 \ar@{->}[r]^{\HH{2j+u-2}{\psi^{j}}}
 \ar@{.>}[ld]_{\HH{2j+u-2}{\psi^{j}}}&
 \HH{2j+u-2}{\sC^{j-1}} 
 \ar@{->}[r]^{\HH{2j+u-2}{\sff^{j-1}}}
 \ar@{.>}[ld]_{\HH{2j+u-2}{\sg^{j-1}}} &
 \HH{2j+u-2}{\sC^{j}} 
 \ar@{->}[r]^{\HH{2j+u-2}{\sg^{j}}}  &
 A_{u-2}^{\oplus{{j+c-1}\choose c-1}} 
 \ar@{->}[d]_{[\zeta_{u-1}^{j-1}]} 
 \ar@{.>}[ld]_{\HH{2j+u-3}{\psi^{j}}}  \\
 A_{u+1}^{\oplus{{j+c-2}\choose c-1}} 
 \ar@{->}[d] 
 \ar@{->}[r]^{\HH{2j+u-2}{\psi^{j-1}}} &
 \HH{2j+u-2}{\sC^{j-2}} 
 \ar@{->}[r]^{\HH{2j+u-2}{\sff^{j-2}}}
 \ar@{.>}[ld]_{\HH{2j+u-2}{\sg^{j-2}}} & 
 \HH{2j+u-2}{\sC^{j-1}} 
 \ar@{->}[r]^{\HH{2j+u-2}{\sg^{j-1}}}  &
 A_u^{\oplus{{j+c-2}\choose c-1}} 
 \ar@{->}[d]_{[\zeta_{u+1}^{j-2}]} 
 \ar@{->}[r]^{\HH{2j+u-3}{\psi^{j-1}}}
 \ar@{.>}[ld]_{\HH{2j+u-3}{\psi^{j-1}}}  &
 \HH{2j+u-3}{\sC^{j-2}} 
 \ar@{->}[r]^{\HH{2j+u-3}{\sff^{j-2}}}
 \ar@{.>}[ld]_{\HH{2j+u-3}{\sg^{j-2}}} &
 \HH{2j+u-3}{\sC^{j-1}} 
 \ar@{->}[r] ^{\HH{2j+u-3}{\sg^{j-1}}}  &
 A_{u-1}^{\oplus{{j+c-2}\choose c-1}}
 \ar@{->}[d]_{[\zeta_u^{j-2}]} 
 \ar@{.>}[ld]_{\HH{2j+u-4}{\psi^{j-1}}} \\
 \vdots \ar@{->}[r] &
 \HH{2j+u-3}{\sC^{j-3}} 
 \ar@{->}[r]^{\HH{2j+u-3}{\sff^{j-3}}} & 
 \HH{2j+u-3}{\sC^{j-2}} 
 \ar@{->}[r]^{\HH{2j+u-3}{\sg^{j-2}}}  &
 A_{u+1}^{\oplus{{j+c-3}\choose c-1}} 
 \ar@{->}[d] 
 \ar@{->}[r] ^{\HH{2j+u-4}{\psi^{j-2}}}&
 \HH{2j+u-4}{\sC^{j-3}} 
 \ar@{->}[r]^{\HH{2j+u-4}{\sff^{j-3}}} 
 \ar@{.>}[ld]_{\HH{2j+u-4}{\sg^{j-3}}} 
 &
 \HH{2j+u-4}{\sC^{j-2}} 
 \ar@{->}[r]^{\HH{2j+u-4}{\sg^{j-2}}}   &
 A_u^{\oplus{{j+c-3}\choose c-1}} 
 \ar@{->}[d]_{[\zeta_{u+1}^{j-3}]}  
 \ar@{.>}[ld]_{\HH{2j+u-5}{\psi^{j-2}}} \\
 &&&\vdots \ar@{->}[r] &
 \HH{2j+u-5}{\sC^{j-4}} 
 \ar@{->}[r] ^{\HH{2j+u-5}{\sff^{j-4}}}&
 \HH{2j+u-5}{\sC^{j-3}} 
 \ar@{->}[r]^{\HH{2j+u-5}{\sg^{j-3}}}   &
 A_{u+1}^{\oplus{{j+c-4}\choose c-1}} 
 \ar@{->}[d] \\
 &&&&&&\vdots
 }
 }
\end{figure}
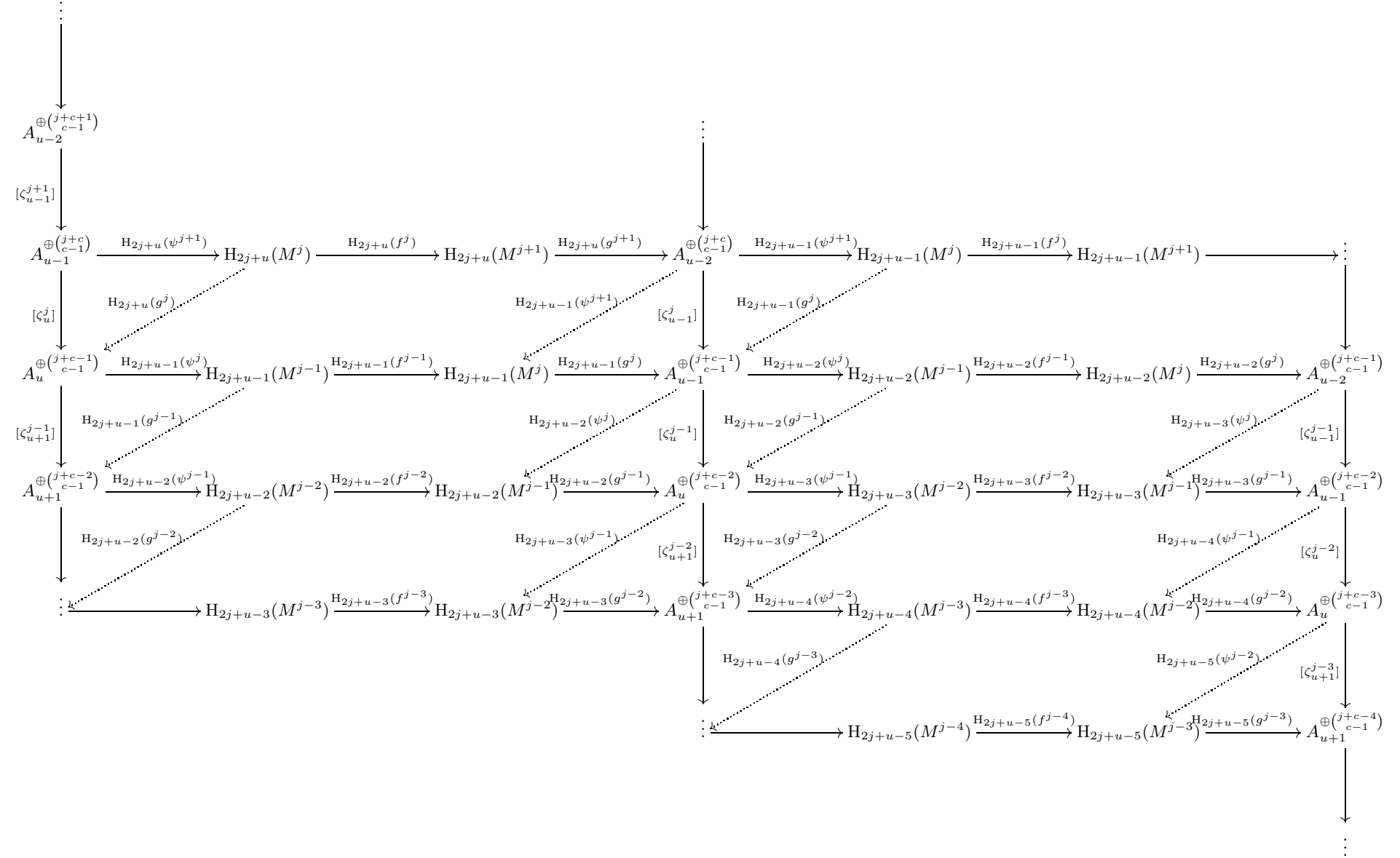
\end{landscape} 

\section{Koszul Homology of a Complete Intersection Ring}
\label{sec:Koszul}

Let $(R,\fm,\sk)$ be a complete intersection local ring of embedding dimension $n$, with maximal ideal $\fm$ and residue field $\sk$.
Assume that $R$ has $\codepth R=\edim R-\depth R=c \geq 1$. Let $K=K^R$ be the Koszul complex of $R$ on a minimal set of generators of $\fm$. The differential graded algebra structure on $K$ induces a graded-commutative algebra structure on the homology $A:=\HH{}{K}= A_0\oplus A_1\oplus\dots\oplus A_c$ of $K$ as a $\sk$-vector space. Since $R$ is a complete intersection, $A$ is the exterior algebra $\bigwedge^\bullet A_1$. In this section, we prove the exactness of sequences \eqref{ses codepth c} on the Koszul homology. 

\medskip

\noindent
{\bf Convention.} The following notation is used for the remaining of the paper.  The differential map on $K$ is denoted by $\partial$. Let $\{z_j\}_{1 \leq j \leq c}$ be elements in $\Ker(\partial_1)$ such that $\{[z_j]\}_{1 \leq j \leq c}$ is a $\sk$-basis for $A_1$. Since $\Ker(\partial_1) \subseteq\fm K_1$, all entries of the vectors $z_j$ are in $\fm.$

\medskip

For any integer $\ell \geq 0$, let $(K^{\oplus \ell},\partial^{\oplus \ell})$ be the direct sum complex with $K^{\oplus \ell} = K \oplus K \oplus \cdots \oplus K$ and $\partial^{\oplus \ell}=\partial \oplus \cdots \oplus \partial$ be a direct sum $\ell$ times.
Here, we use a known combinatorial fact that:
\[ |\{(u_1,\dots,u_k): 1 \leq u_1 \leq \cdots \leq u_k \leq c\}| = {k+c-1 \choose c-1}. \]
Hence, each element $\pi\in K^{\oplus{k+c-1 \choose c-1}}$ has the direct sum indices given by non-decreasingly ordered $k$-tuples as follows:
\[\pi=(\pi_{u_1\dots u_{k}})_{1\leq u_1\leq\dots\leq u_{k}\leq c} \in K^{\oplus{k+c-1 \choose c-1}}. \]
When the subscript is not in non-decreasing order, we use the square bracket notation $[u_1\dots u_k]$ to reorder it non-decreasingly. Furthermore, if $[\pi_{u_1\dots u_k}] \in A_u$ is a homogeneous element of homological degree $u$, then as $\{[z_{1}] \wedge \dots \wedge [z_{s_u}]\}_{1\leq s_1<\dots< s_u\leq c}$ forms a basis in $A_u$, we can uniquely write 
\[ 
[\pi_{u_1\dots u_k}]=
\sum_{1\leq s_1<\dots< s_u\leq c}
a_{u_1\dots u_k}^{s_1\dots s_u}
[z_{s_1}]\wedge \dots\wedge [z_{s_u}],
\]
with coefficients $a_{u_1\dots u_k}^{s_1\dots s_u} \in \sk$. Note that the superscript of $a_{u_1\dots u_k}^{s_1\dots s_u}$ is arranged in \emph{strictly increasing} order, while its subscript can have repeated terms and can be rearranged in \emph{non-decreasing} order by using the square bracket notation $[u_1 \dots u_k]$.
Based on this indexing, we order the components of 
$\pi=(\pi_{u_1\dots u_{k}})_{1\leq u_1\leq\dots\leq u_{k}\leq c}$
using lexicographic order.

\begin{lemma}
\label{zeta chain c}  
Let $(K,\partial)$ be the Koszul complex  on a minimal set of generators of the maximal ideal of a complete intersection local ring  of codepth $c \geq 1$. 
For each integer $k\geq 0$, define the map 
    \begin{equation}   
    \label{zeta codepth c}
    \zeta^{k} \colon \Sigma K ^{\oplus{k+c\choose c-1}} \to  K^{\oplus{k+c-1\choose{c-1}}}
     \quad
    \text{by}
    \quad
      (\pi_{u_1\dots u_{k+1}})_{1\leq u_1\leq\dots\leq u_{k+1}\leq c} \mapsto 
      \Big( \sum_{j=1}^c z_j\wedge \pi_{[v_1\dots v_k j]} \Big)_{1\leq v_1\leq\dots\leq v_k\leq c}.
    \end{equation}
Then, the following assertions hold for all $k\geq 0$:
\begin{enumerate}
 \item[$(a)$] $\zeta^{k}$ is a chain map, and
 \item[$(b)$] $\zeta^{k} \circ \Sigma\zeta^{k+1}= 0$.
\end{enumerate}
\end{lemma}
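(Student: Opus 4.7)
The plan is to reduce both parts to two elementary facts about the DG-algebra structure of the Koszul complex $K$: (i) each $z_j$ is a cycle of homological degree one, so the Leibniz rule yields $\partial(z_j\wedge\pi)=-z_j\wedge\partial(\pi)$ for every $\pi\in K$; and (ii) degree-one elements anticommute in the graded-commutative algebra $K$, so $z_i\wedge z_j+z_j\wedge z_i=0$ for all $i,j$, with the special case $z_i\wedge z_i=0$. Neither the complete intersection hypothesis nor the exterior-algebra structure of $A$ is used at this stage; those features enter only later, when passing to homology in Proposition~\ref{prop:zeta c}.

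For part (a), I would verify the chain-map identity one coordinate at a time. Fix a tuple $(v_1,\dots,v_k)$ with $1\leq v_1\leq\dots\leq v_k\leq c$. The $(v_1,\dots,v_k)$-component of $\zeta^k(\pi)$ is $\sum_{j=1}^c z_j\wedge\pi_{[v_1\dots v_k j]}$. Applying $\partial$ and invoking (i) gives $-\sum_j z_j\wedge\partial\bigl(\pi_{[v_1\dots v_k j]}\bigr)$, which agrees with the $(v_1,\dots,v_k)$-component of $\zeta^k\bigl(\partial^{\Sigma K}(\pi)\bigr)$ under the shift convention $\partial^{\Sigma K}=-\partial^K$. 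Thus $\zeta^k$ commutes with differentials on each direct summand and is therefore a chain map.

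For part (b), I would unwind the composition explicitly. For $\tau=(\tau_{w_1\dots w_{k+2}})$, applying $\Sigma\zeta^{k+1}$ first produces, in coordinate $(u_1,\dots,u_{k+1})$, the element $\sum_i z_i\wedge\tau_{[u_1\dots u_{k+1} i]}$. Feeding this into $\zeta^k$ yields, in coordinate $(v_1,\dots,v_k)$, the double sum $\sum_{i,j} z_j\wedge z_i\wedge\tau_{[v_1\dots v_k j i]}$. Since the bracket notation only records the non-decreasing reordering of its subscripts, one has $\tau_{[v_1\dots v_k j i]}=\tau_{[v_1\dots v_k i j]}$; pairing the $(i,j)$ and $(j,i)$ terms and invoking (ii) cancels the off-diagonal contributions in pairs, while the diagonal $i=j$ terms vanish because $z_i\wedge z_i=0$. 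The main subtlety throughout is bookkeeping: one must track the sign convention attached to the shift $\Sigma$ and carefully exploit the bracket-symmetry of the indexed coordinates. Once these are in place, both assertions follow directly from (i) and (ii), with no deeper input required.
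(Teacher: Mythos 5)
Your proof is correct and follows essentially the same approach as the paper's: part (a) uses the Leibniz rule together with $\partial_1(z_j)=0$ to check the chain-map identity coordinate by coordinate, and part (b) expands the composition into a double sum and cancels via $z_\ell\wedge z_\ell=0$ and $z_\ell\wedge z_j=-z_j\wedge z_\ell$. Your explicit observation that the bracket symmetry $\tau_{[v_1\dots v_k j i]}=\tau_{[v_1\dots v_k i j]}$ is what permits the pairwise cancellation is a helpful clarification of a point the paper leaves implicit.
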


\begin{proof} (a): The map $\zeta^{k}$ is a chain map if  for all $u \geq 0$  the following diagram is commutative:
  \begin{equation}
  \xymatrixrowsep{3pc}
  \xymatrixcolsep{3pc}
  \xymatrix
  {
  K_{u-1}^{\oplus{k+c\choose c-1}}
  \ar@{->}[d]_{-\partial_{u-1}^{\oplus{k+c\choose c-1}}}
  \ar@{->}[r]^{\zeta_u^k}
  & K_{u}^{\oplus{k+c-1\choose c-1}}
  \ar@{->}[d]^{\partial_u^{\oplus{k+c-1\choose c-1}}}
  \\
  K_{u-2}^{\oplus{k+c\choose c-1}}
  \ar@{->}[r]_{\zeta_{u-1}^k}
  &  K_{u-1}^{\oplus{k+c-1\choose c-1}}.
 }
\end{equation}
Indeed, for $(\pi_{u_1u_2\dots u_{k+1}})_{1\leq u_1\leq\dots\leq u_{k+1}\leq c}\in K_{u-1}^{\oplus{k+c\choose c-1}}$ we have:
\begin{align*}
-\zeta^{k}_{u-1}\circ \partial_{u-1}^{\oplus{k+c\choose c-1}} & \big((\pi_{u_1\dots u_{k+1}})_{1\leq u_1\leq\dots\leq u_{k+1}\leq c} \big) \\ &=-\zeta^{k}_{u-1}
   \big(\partial_{u-1}(\pi_{u_1\dots u_{k+1}}) \big)_{1\leq u_1\leq\dots\leq u_{k+1}\leq c}\\
   &=\Big(-\sum_{j=1}^c z_j\wedge \partial_{u-1}(\pi_{[v_1\dots v_k j]}) \Big)_{1\leq v_1\leq\dots\leq v_k\leq c}\\
   &=\Big(\partial_{u}\big(\sum_{j=1}^c z_j\wedge \pi_{[v_1\dots v_k j]} \big)\Big)_{1\leq v_1\leq\dots\leq v_k\leq c}\\
   &=\partial_u^{\oplus{k+c-1\choose c-1}} \Big( \sum_{j=1}^c z_j\wedge \pi_{[v_1\dots v_k j]} \Big)_{1\leq v_1\leq\dots\leq v_k\leq c}\\
   &=\partial_u^{\oplus{k+c-1\choose c-1}} \circ\zeta_u^{k} \big((\pi_{u_1\dots u_{k+1}})_{1\leq u_1\leq\dots\leq u_{k+1}\leq c} \big),
\end{align*}
where the third equality follows from the Leibniz rule and $\partial_1(z_j)=0$ for all $1\leq j\leq c$. 

(b): Let $(\pi_{w_1w_2\dots w_{k+2}})_{1\leq w_1\leq\dots\leq w_{k+2}\leq c}\in K_{u-2}^{\oplus{k+c+1\choose c-1}}$, then
\begin{align*}
    \zeta^{k} \circ \Sigma\zeta^{k+1}&
    \big( (\pi_{w_1\dots w_{k+2}})_{1\leq w_1\leq\dots\leq w_{k+2}\leq c} \big) \\
    &= \zeta^{k} \Big(-\sum_{j=1}^c z_j \wedge \pi_{[u_1\dots u_{k+1} j]} \Big)_{1\leq u_1\leq\dots\leq u_{k+1}\leq c} \\
    &= - \Big( \sum_{\ell=1}^c z_\ell \wedge \big(\sum_{j=1}^c z_j \wedge \pi_{[v_1\dots v_{k} j\, \ell]} \big) \Big)_{1\leq v_1\leq\dots\leq v_{k}\leq c} \\
    &= (0)_{1\leq v_1\leq\dots\leq v_{k}\leq c},
\end{align*}
where the last equality holds by the Koszul relations $z_\ell \wedge z_\ell=0$ and $z_\ell\wedge z_j=- z_j \wedge z_\ell$ for  $\ell \neq j$. Hence, for all $k\geq 0$, we have $\zeta^k \circ \Sigma\zeta^{k+1}= 0$ as desired.
\end{proof}

\begin{proposition}
\label{prop:zeta c} 
Let $(R,\fm,\sk)$ be a complete intersection local ring of codepth $c \geq 1$. Let $K$ be the Koszul complex of $R$ on a minimal set of generators of $\fm$ and $A=A_0\oplus A_1\oplus \cdots \oplus A_c$ be its homology. Then for each integer $k\geq 0$, the following sequence is exact
\begin{equation}
    \label{ses codepth c}
    0\to A_0^{\oplus{k+c\choose c-1}}\xrightarrow{[\zeta^{k}_1]}A_1^{\oplus{k+c-1\choose c-1}} \xrightarrow{[\zeta^{k-1}_2]}A_2^{\oplus{k+c-2\choose c-1}} \to \cdots \xrightarrow{[\zeta^{k-c+1}_{c}]}A_c^{\oplus{k\choose c-1}} \to  0,
\end{equation}
where the maps $\zeta^{k}$ are defined in \eqref{zeta codepth c}.
\end{proposition}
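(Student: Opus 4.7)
The plan is to exploit the exterior algebra structure on $A$ and reinterpret the complex \eqref{ses codepth c} as (the $\sk$-linear dual of) a Koszul complex, whose exactness is standard. The chain-complex property is already in hand by Lemma \ref{zeta chain c}, so the issue is purely linear-algebraic over $\sk$.

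Since $R$ is a complete intersection, $A\cong \bigwedge A_1$ as graded $\sk$-algebras, so $A_u\cong \bigwedge^u A_1$. Set $V:=A_1$, with basis $e_j:=[z_j]$. I identify $A_u^{\oplus \binom{k+c-u}{c-1}}$ with $\bigwedge^u V\otimes \Gamma^{k-u+1}(V)$, where $\Gamma^m(V)$ is the $m$-th divided-power module with basis $\{\tilde y^{a}\}_{|a|=m}$, by matching the indexing tuple $(v_1\le\cdots\le v_{k-u+1})$ to $\tilde y^{a(v)}$ with $a(v)_j:=\#\{i:v_i=j\}$. Translating \eqref{zeta codepth c} into this language, a direct check shows that the induced map $[\zeta^{k-u+1}_u]$ becomes
\[ \omega \otimes \tilde y^a \longmapsto \sum_{j=1}^{c}(e_j\wedge \omega)\otimes \partial_j(\tilde y^a), \]
where $\partial_j(\tilde y^a)=\tilde y^{a-e_j}$ if $a_j\ge 1$ and $0$ otherwise. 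The combinatorial heart of this identification is that summing the bracket-reordering $[v_1\cdots v_k j]$ over $j\in\{1,\dots,c\}$ removes exactly one occurrence of each available index, producing the divided-power partial with no spurious multiplicity factors.

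The main step is then to identify \eqref{ses codepth c} with the $\sk$-linear dual of the internal-degree-$(k+1)$ strand of the Koszul resolution of $\sk$ over $S:=\sk[y_1,\dots,y_c]$. Since $y_1,\dots,y_c$ is a regular sequence in $S$, the Koszul complex $0\to S\otimes \bigwedge^c V\to\cdots\to S\otimes V\to S\to \sk\to 0$ is exact in every characteristic, as is each of its internal-degree-$N$ pieces for $N\ge 1$. Taking $N=k+1$ gives a finite-dimensional exact sequence of $\sk$-vector spaces, whose $\sk$-dual (read in the opposite direction) is again exact. Via the natural isomorphisms $(S_m)^{*}\cong \Gamma^m(V^{*})$, $(\bigwedge^u V)^{*}\cong \bigwedge^u V^{*}$, and the basis identification $V^{*}\cong V$, the dualized sequence matches \eqref{ses codepth c} term-by-term, and a short computation with the duality pairing $\langle y^{\alpha},\tilde y^{\beta}\rangle=\delta_{\alpha\beta}$ shows that the dualized Koszul differential is precisely the map displayed above (up to a global sign convention).

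The main technical obstacle is the sign-and-basis bookkeeping to align \eqref{zeta codepth c} with the dualized Koszul differential, particularly matching the lexicographic ordering conventions of the paper to the standard exterior-algebra signs. A slicker but characteristic-restricted shortcut is available: one writes down the explicit contracting homotopy $h(\omega\otimes \tilde y^b):=\sum_{\ell=1}^{c}\iota_{e_\ell^{*}}(\omega)\otimes(e_\ell\cdot \tilde y^b)$ on $\bigwedge V\otimes \Gamma V$ and verifies, via the Clifford relation $\iota_{e_\ell^{*}}(e_j\wedge-)+(e_j\wedge)\iota_{e_\ell^{*}}(-)=\delta_{j\ell}\,\mathrm{id}$, that $d\,h+h\,d=(u+m)\cdot \mathrm{id}$ on $\bigwedge^u V\otimes \Gamma^m V$; this gives contractibility of the internal-degree-$(k+1)$ piece whenever $\mathrm{char}\,\sk \nmid k+1$. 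The Koszul-duality route above avoids this restriction and handles all characteristics uniformly.
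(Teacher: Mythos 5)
Your proof is correct but takes a genuinely different route from the paper's \emph{main} proof of Proposition~\ref{prop:zeta c}, which is an explicit hands-on verification: given a cycle $[\pi]$ with coefficients $a_{\ell_1\dots\ell_{k-u+1}}^{s_1\dots s_u}$, the authors manufacture a preimage via the coefficient assignment \eqref{def a} and check, through the case analysis establishing \eqref{iden a}, that it maps to $[\pi]$. You instead reinterpret the sequence as the $\sk$-linear dual of the internal-degree-$(k+1)$ strand of the Koszul resolution of $\sk$ over the polynomial ring $\sk[y_1,\dots,y_c]$, whose exactness in positive internal degree is standard and characteristic-free; your identification of $[\zeta_u^{k-u+1}]$ with $\omega\otimes\tilde y^a\mapsto\sum_j(e_j\wedge\omega)\otimes\partial_j(\tilde y^a)$ on $\bigwedge^{u-1}V\otimes\Gamma^{k-u+2}(V)$ checks out, and dualizing the Koszul differential indeed produces this map with no sign trouble once the wedge-insertion sign is absorbed into $e_j^*\wedge(-)$. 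This is essentially the argument the paper itself sketches in Appendix~\ref{appendix}, where the authors phrase it with the $R$-free divided-power modules $D^\bullet(V)$, identify the sequence as $\sk\otimes_R(-)$ applied to a complex of free $R$-modules, and take $R$-duals to reach the Koszul complex over $S_\bullet(V^*)$; your version works directly over $\sk$, which streamlines the transfer of exactness (no need to observe that a bounded exact complex of finite free modules over a local ring is split, hence survives $\sk\otimes_R(-)$). What the paper's direct computation buys is self-containment and a concrete preimage formula, in keeping with the explicit-matrix presentation of Section~\ref{subsec:examples}; what your route buys is brevity and conceptual transparency. Your remark that the contracting-homotopy shortcut $dh+hd=(k+1)\cdot\mathrm{id}$ only works when $\chr\sk\nmid k+1$ is accurate, and you correctly route around it via Koszul duality.
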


\begin{proof} 
By Lemma \ref{zeta chain c}, the maps $[\zeta^{j}_u]$ are well defined and the sequence of maps in \eqref{ses codepth c} forms a complex. To show that this sequence is exact, it remains to show $\Ker([\zeta_{u+1}^{k-u}]) \subseteq \im([\zeta_u^{k-u+1}])$, for all $0\leq u\leq c$.

Consider an element $[\pi] \in \Ker([\zeta_{u+1}^{k+u}]) \subseteq A_{u}^{\oplus{k+c-u\choose c-1}} = A_{u}^{\oplus{k-u+1+c-1\choose c-1}}$. For ${1\leq \ell_1\leq\dots\leq \ell_{k-u+1} \leq c}$, we write the $\ell_1\dots\ell_{k-u+1}$ component of $[\pi]$ as
\[ [\pi_{\ell_1\dots\ell_{k-u+1}}] =\sum_{1\leq s_1<\cdots<s_u\leq c} a_{\ell_1\dots\ell_{k-u+1}}^{s_1\dots s_{u}}
[z_{s_1}]\wedge\dots\wedge [z_{s_u}],\]
with coefficients in $\sk$. For $1\leq v_1\leq\dots\leq v_{k-u} \leq c$, the $v_1\dots v_{k-u}$ component of $[\zeta_{u+1}^{k-u}(\pi)]$ is:
\begin{align*}
    0&= \sum_{s=1}^{c}[z_s]\wedge \pi_{[v_1\dots v_{k-u}s]} \\
     &=\sum_{s=1}^{c}[z_s]\wedge \Big(\sum_{1\leq s_1<\cdots<s_u\leq c} a_{[v_1\dots v_{k-u}s]}^{s_1\dots s_{u}}
[z_{s_1}]\wedge\dots\wedge [z_{s_u}]\Big)\\
&=\sum_{1\leq s_1<\cdots<s_u\leq c} \ \, \Big(\sum_{s=1}^{c}\sgn\begin{psmallmatrix} s_1\dots s\dots  s_u\\ss_1\dots s_u\end{psmallmatrix} a_{[v_1\dots v_{k-u}s]}^{s_1\dots s_{u}}\Big)
[z_{s_1}]\wedge\dots \wedge[z_s]\wedge\dots\wedge[z_{s_u}]\\
&=\sum_{1\leq t_1<\cdots<t_{u+1}\leq c}\Big(\sum_{i=1}^{u+1}\sgn\begin{psmallmatrix} t_1\dots t_{u+1}\\t_i(t_1\dots t_{u+1}\setminus t_i)\end{psmallmatrix}a_{[v_1\dots v_{k-u}t_i]}^{t_1\dots t_{u+1}\setminus t_i}\Big)[z_{t_1}]\wedge\dots\wedge[z_{t_{u+1}}]\\
&=\sum_{1\leq t_1<\cdots<t_{u+1}\leq c}\Big(\sum_{i=1}^{u+1}(-1)^{i-1}a_{v_1\dots v_{k-u}t_i]}^{t_1\dots t_{u+1}\setminus t_i}\Big)[z_{t_1}]\wedge\dots\wedge[z_{t_{u+1}}].
\end{align*}
Here, we denote the sign of the permutation 
$\begin{psmallmatrix} s_1\dots s\dots s_u\\ss_1\dots s_u\end{psmallmatrix}$ by 
$\sgn\begin{psmallmatrix} s_1\dots s\dots s_u\\ss_1\dots s_u\end{psmallmatrix}$, and $t_1\dots t_{u+1}\setminus t_i$ means that we remove $t_i$ from the expression $t_1\dots t_{u+1}$.
Since 
$\{[z_{t_1}]\wedge\dots \wedge[z_{t_{u+1}}]\}_{1\leq t_1<\dots<t_{u+1}\leq c}$ is a linearly independent set in $A_{u+1}$, we have:
\begin{equation}
\label{rel a}
\sum_{i=1}^{u+1}(-1)^{i-1}a_{[v_1\dots v_{k-u}t_i]}^{t_1\dots t_{u+1}\setminus t_i}=0,\quad \text{for each }
1 \leq t_1 < \cdots < t_{u+1} \leq c.
\end{equation}

For each $1\leq r_1< \dots < r_{u-1} \leq c$ and $1\leq w_1\leq \dots \leq w_{k-u+2}\leq c$ we set:

\begin{equation}
\label{def a}
    a_{w_1w_2\dots w_{k-u+2}}^{r_1\dots r_{u-1}}
    :=
    \begin{cases} 
      a_{w_2\dots w_{k-u+2}}^{w_1r_1\dots r_{u-1}},&\text{if}\  \  w_1< r_1 \\
    0,& \text{otherwise}. \end{cases}
\end{equation}

Using the above notation with $1\leq s_1<\cdots<s_u\leq c$ and ${1\leq \ell_1\leq\dots\leq \ell_{k-u+1} \leq c}$, we prove:
\begin{equation}
\label{iden a}
\sum_{i=1}^{u} 
(-1)^{i-1}
a_{[\ell_1\dots \ell_{k-u+1}s_i]}^{s_1\dots s_{u}\setminus s_i}=a_{\ell_1\dots\ell_{k-u+1}}^{s_1\dots s_{u}}.
\end{equation}

\begin{proof}[Proof of \eqref{iden a}] Since $1\leq s_1<\cdots<s_u\leq c$, to use definition \eqref{def a}, we consider the following three cases.

\underline{Case 1.}  When $\ell_1<s_1$:
\[
\sum_{i=1}^{u} 
(-1)^{i-1}
a_{[\ell_1\ell_2\dots \ell_{k-u+1}s_i]}^{s_1\dots s_{u}\setminus s_i}=\sum_{i=1}^{u} 
(-1)^{i-1}
a_{[\ell_2\dots \ell_{k-u+1}s_i]}^{\ell_1s_1\dots s_{u}\setminus s_i} =a^{s_1\dots s_u}_{\ell_1\dots\ell_{k-u+1}}.
\]
The first equality holds by using the definition \eqref{def a} since $\ell_1 < s_1  \leq s_i$, for any $1 \leq i \leq u$. The second equality follows from \eqref{rel a}.

\underline{Case 2.}  When $\ell_1=s_1$:
\[
\sum_{i=1}^{u} 
(-1)^{i-1}
a_{[\ell_1\ell_2\dots \ell_{k-u+1}s_i]}^{s_1\dots s_{u}\setminus s_i}
=\sum_{i=1}^{u} 
(-1)^{i-1}
a_{[s_1\ell_2\dots \ell_{k-u+1}s_i]}^{s_1\dots s_{u}\setminus s_i} =a^{s_1\dots s_u}_{\ell_1\dots\ell_{k-u+1}}.
\]
The first equality just replaces $\ell_1 = s_1$ in the subscript. The second equality uses the definition \eqref{def a}, in this case, the only nonzero term occurs when $i=1$.

\underline{Case 3.}  When $\ell_1 > s_1$: Suppose $s_j<\ell_1<s_{j+1}$ for some $1 \leq j \leq u-1$
\begin{align*}
\sum_{i=1}^{u} 
(-1)^{i-1}
a_{[\ell_1\ell_2\dots \ell_{k-u+1}s_i]}^{s_1\dots s_{u}\setminus s_i}
&= a^{s_2\dots s_u}_{s_1\ell_1\dots\ell_{k-u+1}}
+
\sum_{i=2}^{j} 
(-1)^{i-1}
a_{s_i\ell_1\ell_2\dots \ell_{k-u+1}}^{s_1\dots s_{u}\setminus s_i} +
\sum_{i=j+1}^{u} 
(-1)^{i-1}
a_{\ell_1[\ell_2\dots \ell_{k-u+1}s_i]}^{s_1\dots s_{u}\setminus s_i} \\
&= a^{s_1\dots s_u}_{\ell_1\dots\ell_{k-u+1}}.
\end{align*}
The first equality is just to break the sum from $i=1$ to $i=u$ into three smaller sums. In the second equality, we apply the definition \eqref{def a} for each term. Since $\ell_1 > s_i > s_1$ for any $1\leq i \leq j$, the second term is zero since $s_i>s_1$, and the third term is also zero since $\ell_1 > s_1$. The relation \eqref{iden a} now holds.
\end{proof}

Now, we define $([\pi']_{w_1\dots w_{k-u+2}})_{1\leq w_1\leq\cdots\leq w_{k-u+2}\leq c}$ to be an element in $A_{u-1}^{\oplus{k+c-u+1\choose c-1}}$ given by   
\begin{equation*}
[\pi']_{w_1\dots w_{k-u+2}}:=\sum_{1\leq r_1<\cdots<r_{u-1}\leq c} a_{w_1\dots w_{k-u+2}}^{r_1\dots r_{u-1}}
[z_{r_1}]\wedge\dots\wedge [z_{r_{u-1}}],
\end{equation*}
where the coefficients $a_{w_1\dots w_{k-u+2}}^{r_1\dots r_{u-1}}$ are defined as in \eqref{def a}.
Applying $[\zeta_{u}^{k-u+1}]$ on this element, its $\ell_1 \dots \ell_{k-u+1}$ component is: 
\begin{align*}
   &\sum_{r=1}^{c}[z_r]\wedge [\pi']_{[\ell_1\dots\ell_{k-u+1}r]}\\
   &= \sum_{r=1}^{c}[z_r]\wedge \Big(\sum_{1\leq r_1<\cdots<r_{u-1}\leq c}
     a_{[\ell_1\dots \ell_{k-u+1}r]}^{r_1\dots r_{u-1}}
[z_{r_1}]\wedge\dots\wedge [z_{r_{u-1}}]\Big)\\
&=\sum_{1\leq r_1<\cdots<r_{u-1}\leq c} \ \, 
\Big(
\sum_{r=1}^c\sgn
\begin{psmallmatrix} 
r_1\dots r\dots r_{u-1}
\\rr_1\dots r_{u-1}
\end{psmallmatrix}
 a_{[\ell_1\dots \ell_{k-u+1}r]}^{r_1\dots r_{u-1}}
[z_{r_1}]\wedge\dots \wedge[z_r]\wedge\dots\wedge[z_{r_{u-1}}]\Big)\\
&=\sum_{1\leq s_1<\cdots<s_{u}\leq c}
\Big(\sum_{i=1}^{u} 
(-1)^{i-1}
a_{[\ell_1\dots \ell_{k-u+1}s_i]}^{s_1\dots s_{u}\setminus s_i}
\Big)
[z_{s_1}]\wedge\dots\wedge[z_{s_{u}}] \\
& =\sum_{1\leq s_1<\cdots<s_u\leq c} a_{\ell_1\dots\ell_{k-u+1}}^{s_1\dots s_{u}}
[z_{s_1}]\wedge\dots\wedge [z_{s_u}]\\
&=[\pi]_{\ell_1\dots\ell_{k-u+1}},
\end{align*}
which is the $\ell_1\dots\ell_{k-u+1}$ component of $[\pi] \in \Ker([\zeta_{u+1}^{k-u}])$, where the fourth equality follows by \eqref{iden a}. Hence, $\Ker([\zeta_{u+1}^{k-u}]) \subseteq \im([\zeta_{u}^{k-u+1}])$, and the sequence \eqref{ses codepth c} is exact.
\end{proof}

\section{Iterated Mapping Cone Construction on the Koszul Complex}
\label{sec:mapping cone}

In this section, let $R$ be any commutative ring. By applying the mapping cone construction (see e.g., \cite[1.5.1]{W}) recursively on a sequence of $R$-complexes, we construct a sequence of iterated mapping cones. When applying the iterated mapping cone construction on the Koszul complex over a complete intersection ring, we use the algebraic structure of the Koszul homology to describe the homology of these mapping cones in Section~\ref{subsec:homology}.

\subsection{Iterated Mapping Cone Construction}

For any complex $C =(C_i,\partial_i^{C})_{i\in\BZ}$, denote by $\Sigma C$ the complex with $(\Sigma C)_i=C_{i-1}$ and $\partial_i^{\Sigma C}=- \partial_{i-1}^{C}$. Recall that for any complex homomorphism $\psi: (C,\partial^C) \to (D,\partial^D)$, the \emph{mapping cone of $\psi$}, denoted by $(\Cone(\psi),\partial)$, is defined to be the complex with $\Cone(\psi)_i = C_{i-1} \oplus D_i$ for all $i$, and differential map $\partial_i: \Cone(\psi)_i \to \Cone(\psi)_{i-1}$ with $\partial_i(c,d) = (-\partial_{i-1}^C(c), \psi_{i-1}(c) + \partial_i^D(d))$.

\begin{construction}
  \label{def cone}
  Let $\{C^{j}\}_{j\geq 0}$ be a sequence of bounded-below complexes of projective $R$-modules and  \[\{\varphi^j\colon C^j\to C^{j-1} \}_{j\geq 1}\]
  be a sequence of chain maps. Set $\sC^0 \colon=C^0$, map $\psi^1\colon =\varphi^1$, and $\sC^1\colon=\Cone(\psi^1)$. Since $C^2$ consists of projective modules, by \cite[Proposition 5.2.2]{CFH} the functor $\Hom{R}{C^2}{-}$ is exact.
  Therefore, by \cite[1.5.2]{W}, there exists a homomorphism $\psi^2: \Sigma C^2\to\sC^1$ such that the following diagram of complexes is commutative:
\[
\xymatrix{
&&&\Sigma C^2\ar@{->}[d]^{\Sigma\varphi^2}\ar@{.>}[dl]_{\psi^2}&\\
0\ar@{->}[r]&\sC^0\ar@{->}[r]^{\sff^0}&\sC^1\ar@{->}[r]^{\sg^1}&\Sigma C^1\ar@{->}[r]&0,
}
\]
where the bottom row is an exact sequence of complexes induced by the mapping cone $\sC^1$. Define  $\sC^2\colon=\Cone(\psi^2)$. 
Inductively, for each integer $j\geq 1$, there exists a chain complex 
\[\psi^{j+1}\colon \Sigma^{j}C^{j+1} \to \sC^j\] 
such that the following diagram of complexes is commutative, with bottom row being exact:
\begin{equation}
  \label{cone diagram}
\xymatrix{
&&&\Sigma^{j} C^{j+1}\ar@{->}[d]^{\Sigma^{j}\varphi^{j+1}}\ar@{.>}[dl]_{\psi^{j+1}}&\\
0\ar@{->}[r]&\sC^{j-1}\ar@{->}[r]^{\sff^{j-1}}&\sC^{j}\ar@{->}[r]^{\sg^{j}}&\Sigma^{j} C^{j}\ar@{->}[r]&0.
}
\end{equation}
Define  
\[\sC^{j+1}\colon=\Cone(\psi^{j+1}).\]
Remark that for each $j\geq 1$, each homological degree $i$ component of the complex $\sC^j$ can be expressed as: 
\begin{equation}
\label{Cj}
    \sC^j_i = C^j_{i-j} \oplus \sC^{j-1}_{i}.
\end{equation}
In particular, each $\sC^j$ is a complex of projective modules. In this way, we obtain an inductive sequence $\{\sC^j,\sff^j\}_{j\geq 0}$ of complexes and injective chain maps. The direct limit of this sequence is:
\begin{equation}
  \label{eq cone}
  \sC \colon=\lim_{j\to\infty}\sC^j.
\end{equation}
By \cite[Example 3.2.9]{CFH}, $\sC$ is in fact the union complex $\bigcup_{j\geq 0}\sC^j$. 
\end{construction}

\begin{definition}
Let $\{\varphi^j: C^j\to C^{j-1} \}_{j\geq 1}$ be a sequence of chain maps of bounded below complexes. The sequence $\{\sC^j,\sff^j\}_{j\geq 0}$ of complexes in Construction \ref{def cone} is an {\it iterated mapping cone sequence of $\{\varphi^j\}_{j\geq 1}$} and the complex $\sC$ in \eqref{eq cone} is a {\it limit mapping cone of $\{\varphi^j\}_{j\geq 1}$}.
\end{definition}

Remark that by construction, $\sC$ depends on the choice of the sequence of lifting maps $\{\psi^j\}_{j\geq 1}$. In the case when $(C^j,\varphi^j)_{j\geq 1}$ is a complex of complexes, we describe these lifting maps $\{\psi^j\}_{j\geq 1}$ explicitly in the following result.

\begin{proposition}
Let $\{\varphi^j: C^j\to C^{j-1} \}_{j\geq 1}$ be a sequence of chain maps of bounded below complexes of projective $R$-modules such that $\varphi^j\circ\varphi^{j+1}=0$ for all $j\geq 1$, that is, 
\[\dots \to C^j\xra{\varphi^j} C^{j-1}\to \dots \to C^2\xra{\varphi^2} C^1\xra{\varphi^1}C^0\] is a complex of complexes. 
Then, for each $j\geq 1$, the map $\psi^{j+1}: \Sigma^{j}C^{j+1}\to\sC^{j}$ in diagram \eqref{cone diagram} can be chosen in each degree $i$ to be the composition 
\[\psi^{j+1}_i: C^{j+1}_{i-j}\xra{\varphi^{j+1}_{i-j}} C^{j}_{i-j}\xra{(-1)^j\iota^j_{i-j}} \sC^{j}_{i}= C^j_{i-j}\oplus\sC^{j-1}_{i}\] where $\iota^j_{i-j}$ is the natural injection of $R$-modules.
\end{proposition}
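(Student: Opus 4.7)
The plan is to proceed by induction on $j \geq 0$, where the statement for $j$ asserts that $\psi^{j+1}$ can be chosen according to the displayed formula. At each step two things must be verified: that the proposed $\psi^{j+1}$ is an honest chain map $\Sigma^j C^{j+1} \to \sC^j$, and that it makes the triangle in \eqref{cone diagram} commute, so that it qualifies as a valid choice of lift.

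The base case $j=0$ is immediate, since Construction \ref{def cone} already sets $\psi^1:=\varphi^1$ and the formula specializes to this as $\sC^0=C^0$, $\iota^0$ is the identity, and $(-1)^0=1$. For the inductive step, assume the formula for $\psi^j$, so that $\psi^j_{i-1}(w) = ((-1)^{j-1}\varphi^j_{i-j}(w),\, 0)$ in the decomposition $\sC^{j-1}_{i-1} = C^{j-1}_{i-j}\oplus\sC^{j-2}_{i-1}$. I will expand the differential of $\sC^j=\Cone(\psi^j)$ as
\[
\partial^{\sC^j}_i(c,x) = \bigl((-1)^j\partial^{C^j}_{i-j}(c),\; \psi^j_{i-1}(c) + \partial^{\sC^{j-1}}_i(x)\bigr)
\]
for $(c,x)\in C^j_{i-j}\oplus\sC^{j-1}_i$, substitute $\psi^{j+1}_i(y) = ((-1)^j\varphi^{j+1}_{i-j}(y),\,0)$, and compare against $\psi^{j+1}_{i-1}\circ\partial^{\Sigma^j C^{j+1}}_i(y)$. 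The second coordinate of $\partial^{\sC^j}\psi^{j+1}(y)$ reduces to a sign times $\psi^j_{i-1}\varphi^{j+1}_{i-j}(y)$, which vanishes by combining the inductive form of $\psi^j$ with the hypothesis $\varphi^j\circ\varphi^{j+1}=0$; the first coordinates agree after the double sign $(-1)^j(-1)^j=1$ collapses and the chain map property of $\varphi^{j+1}$ is invoked. Commutativity of \eqref{cone diagram} then reduces to observing that $\sg^j_i$ is projection onto the $C^j_{i-j}$ summand, so that $\sg^j_i\psi^{j+1}_i$ recovers $(\Sigma^j\varphi^{j+1})_i$ under the shift-of-morphisms convention in force.

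The main obstacle is the careful bookkeeping of signs, which accumulate from three independent sources: the intrinsic factor $(-1)^j$ built into $\psi^{j+1}$, the factor $(-1)^{j-1}$ absorbed into $\partial^{\sC^j}$ because $\sC^j$ is $\Cone$ of a map whose source has been shifted $j-1$ times, and the factor $(-1)^j$ in $\partial^{\Sigma^j C^{j+1}}_i = (-1)^j\partial^{C^{j+1}}_{i-j}$. The conceptual content, however, is transparent: the strict relation $\varphi^j\varphi^{j+1}=0$ (rather than a mere null-homotopy) is exactly what kills the potential obstruction $\psi^j\circ\varphi^{j+1}$ appearing in the second coordinate of the chain map condition, so the lift whose abstract existence is guaranteed by Construction \ref{def cone} can in fact be taken to have trivial $\sC^{j-1}$-component, reducing to the stated closed form.
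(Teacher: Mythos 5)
Your proof is correct and follows essentially the same route as the paper: verify the triangle in \eqref{cone diagram} commutes via the projection $\sg^j$, then verify the chain-map condition by expanding $\partial^{\sC^j}$ on the image of $\psi^{j+1}$, using the closed form of $\psi^j$ together with $\varphi^j \circ \varphi^{j+1} = 0$ to kill the $\sC^{j-1}$-component. The only difference is that you make explicit the induction on $j$ that the paper leaves implicit when it invokes ``the given expression for $\psi^j$'' in its fifth equality.
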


\begin{proof} Using the expression for $\psi^{j+1}_i$ given in the statement, it is clear that the triangle in diagram \eqref{cone diagram} is commutative, that is, for all $x\in C^{j+1}_{i-j}$ we have 
\begin{equation*}
    \sg^j_i(\psi^{j+1}_i(x))=(-1)^j\varphi^{j+1}_{i-j}(x) = (\Sigma^j \varphi^{j+1})_i(x).
\end{equation*}
It remains to check that $\psi^{j+1}$ is a  chain homomorphism for all $j\geq 1$, that is, we show for each degree $i$:
\begin{equation*}
\partial^{\sC^{j}}_{i}\circ \psi^{j+1}_{i}=(-1)^j\psi^{j+1}_{i-1}\circ \partial^{C^{j+1}}_{i-j}. 
\end{equation*}
For all $x \in C^{j+1}_{i-j}$, the left-hand side of the above equation is
\begin{align*}
    \partial^{\sC^{j}}_{i}\circ \psi^{j+1}_i(x)
    &= (-1)^{j}\partial^{\sC^{j}}_{i}\circ\iota_{i-j}^j\circ\varphi^{j+1}_{i-j}(x)\\
    &= (-1)^{j}\partial^{\sC^{j}}_{i}(\varphi^{j+1}_{i-j}(x),0)\\
    &= (-1)^j\big((-1)^{j}\partial^{C^{j}}_{i-j}(\varphi^{j+1}_{i-j}(x)), -\psi^j_{i-1}(\varphi^{j+1}_{i-j}(x))\big)\\
    &= (-1)^j\big((-1)^{j}\varphi^{j+1}_{i-j-1}(\partial^{C^{j+1}}_{i-j}(x)), -\psi^j_{i-1}(\varphi^{j+1}_{i-j}(x))\big)\\
    &= (-1)^j\big((-1)^{j}\varphi^{j+1}_{i-j-1}(\partial^{C^{j+1}}_{i-j}(x)), 0\big)\\
    &= (-1)^j\psi^{j+1}_{i-1}\big(\partial^{C^{j+1}}_{i-j}(x)\big),
\end{align*}
which is the right-hand side of the desired equation. Here, the third equality applies the differential map on the cone complex $\sC^j$, the fourth equality holds since $\{\varphi^j\}_{j\geq 1}$ is a sequence of chain maps, the fifth equality holds by the assumption $\varphi^j\circ\varphi^{j+1}=0$ and the given expression for $\psi^j$, and the last equality holds by the given expression for $\psi^{j+1}$.
\end{proof}

\begin{remark}
 \label{les isom} 
 Let $\{C^{j}\}_{j\geq 0}$ be a sequence of complexes of projective $R$-modules such that $C^j_{i<0}=0.$
 For each integer $j\geq 1$, by \cite[Theorem 1.3.1]{W}, the short exact sequence in \eqref{cone diagram} induces a long exact sequence of homology
\begin{equation*}
\hspace{0.5cm}
    \begin{tikzpicture}[descr/.style={fill=white, inner sep=1.5pt}]
      \matrix(m)[
      matrix of math nodes,
      row sep=3 em,
      column sep=5em,
      text height=1.5ex, text depth=0.25ex
      ]
      { \cdots\HH{i+1}{\sC^{j-1}} & \HH{i+1}{\sC^{j}}&\HH{i}{\Sigma^{j-1}C^{j}}\\
        \HH{i}{\sC^{j-1}} & \HH{i}{\sC^{j}}&\HH{i-1}{\Sigma^{j-1}C^{j}}\\
        \HH{1}{\sC^{j-1}} & \HH{1}{\sC^{j}}&\HH{0}{\Sigma^{j-1}C^{j}}\\
          \HH{0}{\sC^{j-1}} & \HH{0}{\sC^{j}}&\HH{-1}{\Sigma^{j-1}C^{j}}.\\
      };
      \path[overlay,->, font=\scriptsize,>=latex]
        (m-1-1) edge node[descr,yshift= 1.3ex]{$\HH{i+1}{\sff^{j-1}}$} (m-1-2)
        (m-1-2) edge node[descr,yshift= 1.3ex]{$\HH{i+1}{\sg^{j}}$}(m-1-3)
        (m-1-3) edge[out=355,in=175] node[descr,yshift=0.3ex] {$\HH{i}{\psi^{j}}$} (m-2-1)
        (m-2-1) edge node[descr,yshift= 1.3ex]{$\HH{i}{\sff^{j-1}}$} (m-2-2)
        (m-2-2) edge node[descr,yshift= 1.3ex]{$\HH{i}{\sg^{j}}$}(m-2-3)
        (m-2-3) edge[out=355,in=175,dashed] node[descr,yshift=0.3ex] {} (m-3-1)
        (m-3-1) edge node[descr,yshift= 1.3ex]{$\HH{1}{\sff^{j-1}}$} (m-3-2)
        (m-3-2) edge node[descr,yshift= 1.3ex]{$\HH{1}{\sg^{j}}$}(m-3-3)
        (m-3-3) edge[out=355,in=175] node[descr,yshift=0.3ex] {$\HH{0}{\psi^{j}}$} (m-4-1)
        (m-4-1) edge node[descr,yshift= 1.3ex]{$\HH{0}{\sff^{j-1}}$} (m-4-2)
        (m-4-2) edge node[descr,yshift= 1.3ex]{}(m-4-3);
        \end{tikzpicture}
\end{equation*}
Since $C_i^j=0$ for all $i<0$ and all $j\geq 0$, we have $\HH{i}{\Sigma^{j-1}C^j}\cong\HH{i-j+1}{C^j}=0, \text{ for all } i\leq j-2.$
Therefore, for each $j \geq 1$, the long exact sequence above induces the following consequences:
    \begin{align*} 
    \HH{i}{\sff^{j-1}} &: \HH{i}{\sC^{j-1}}\to\HH{i}{\sC^j} \text{ is an isomorphism for all } i\leq j-2,\ \text{and}\\
\HH{j-1}{\sff^{j-1}} &: \HH{j-1}{\sC^{j-1}} \to\HH{j-1}{\sC^j} \text{ is an monomorphism.} 
    \end{align*}
\end{remark}

\subsection{Homology of the Iterated Mapping Cone on the Koszul Complex}  
\label{subsec:homology}

Let $(R,\fm,\sk)$ be a complete intersection ring of codepth $c \geq 1$. In this subsection, we apply Construction~\ref{def cone} to a sequence of complexes built from the Koszul complex $K$. In Theorem~\ref{homology c}, we provide a description of the homology of these mapping cones.

\begin{setup}
\label{setup codepth c}
Let $(K,\partial)$ be the Koszul complex  on a minimal set of generators of the maximal ideal $\fm$, set
\begin{equation*}
C^0:=K, \quad   
C^j:= \Sigma ^{j}K^{\oplus{j+c-1\choose c-1}},\quad \text{and} \quad \varphi^{j}:=\Sigma^{j-1}\zeta^{j}:C^{j}\to C^{j-1},\quad \text{for any integer} \quad j\geq 1.
\end{equation*}
By Lemma \ref{zeta chain c}, we obtain that $\varphi^{j}$ is a chain map for all $j\geq 1$ and $\{C^j,\varphi^j\}_{j\geq 0}$ is a complex of complexes. We define an iterated mapping cone $\sC=\lim_{j\to\infty}\sC^j$, as in Construction \ref{def cone}.  Observe that $\sC^0= K$, and thus we have the following Koszul homology on a complete intersection ring of codepth $c \geq 1$:
\begin{equation} 
\label{Koszul homology c}
\HH{i}{\sC^0}=
\begin{cases}A_i,&\text{if}\ 0\leq i\leq c\\   
             0,&\text{otherwise}.
\end{cases} 
\end{equation}
For each integer $j\geq 1$, the diagram \eqref{cone diagram} becomes the following diagram with exact rows, where the top row is a shift of the bottom row with a change of index $j$:
\begin{equation}
\label{cone diagram c}
\xymatrixrowsep{2pc}
 \xymatrixcolsep{3pc}
\xymatrix{
0\ar@{->}[r]&
\Sigma\sC^{j}\ar@{->}[r]^{\Sigma\sff^{j}}&
\Sigma\sC^{j+1}\ar@{->}[r]^{\Sigma\sg^{j+1}}&
\Sigma^{2j+1} K^{\oplus{j+c\choose c-1}}\ar@{->}[d]^{\Sigma^{2j}\zeta^{j}}\ar@{.>}[dl]_{\psi^{j+1}}\ar@{->}[r]&0\\
0\ar@{->}[r]&\sC^{j-1}\ar@{->}[r]^{\sff^{j-1}}&\sC^{j}\ar@{->}[r]^{\sg^{j}}&\Sigma^{2j} K^{\oplus{j+c-1\choose c-1}}\ar@{->}[r]&0.
}
\end{equation}
The exact rows induce the rows of long exact sequences of homology in Figure~\ref{fig c}. We note that in Figure~\ref{fig c}, the columns of Koszul homology are indeed the exact sequences \eqref{ses codepth c} in Proposition~\ref{prop:zeta c} and all triangles right and left to the columns in Figure~\ref{fig c} are commutative. For simplicity, we omit the $\pm$ signs on the maps due to shifting.
\end{setup}

\begin{lemma}
\label{imzeta}
Assume the above Setup \ref{setup codepth c}. For all integers $k \geq 0$ and $u\in\BZ$, we have \[\HH{u}{\sff^{k}}=0\quad\text{and}\quad \HH{2k+u}{\sC^k} \cong \im([\zeta_u^{k}]),\] 
where $[\zeta^{k}]$ are maps from the exact sequences \eqref{ses codepth c} in Proposition~\ref{prop:zeta c}. 

In particular, the rows of the long exact sequences in Figure~\ref{fig c} split.
\end{lemma}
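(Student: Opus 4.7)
The plan is simultaneous induction on $k\geq 0$, carried out for the nontrivial range $1\leq u\leq c$ (outside this range, both sides of $\HH{2k+u}{\sC^k}\cong \im([\zeta^k_u])$ vanish from Theorem~A and the vanishing of the $A_u$ or the binomial coefficients involved in \eqref{zeta codepth c}). The key technical tool is the explicit factorization $\psi^{j+1}_i=(-1)^j\iota^j_{i-j}\circ \varphi^{j+1}_{i-j}$ proved in Section~\ref{subsec:homology}, which realizes the connecting map of the LES of \eqref{cone diagram c} as the map $[\zeta^j_u]$ of Proposition~\ref{prop:zeta c}, up to sign---this is the content of the commutative triangles in Figure~\ref{fig c}.

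For the base case $k=0$, we have $\sC^0=K$, so $\HH{u}{\sC^0}=A_u$. Because $R$ is a complete intersection and $A=\bigwedge^\bullet A_1$, the map $[\zeta^0_u]:A_{u-1}^{\oplus c}\to A_u$, sending $([\pi_1],\ldots,[\pi_c])$ to $\sum_j [z_j]\wedge[\pi_j]$, is surjective for $u\geq 1$; hence $\im([\zeta^0_u])=A_u=\HH{u}{\sC^0}$. The LES of $0\to \sC^0\to \sC^1\to \Sigma^2 K^{\oplus c}\to 0$ has connecting map equal to $[\zeta^0_u]$ (up to sign, since $\psi^1=\zeta^0$), whose surjectivity forces $\HH{u}{\sff^0}=0$ by exactness.

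Inductive step. Assume the lemma at stage $k-1$, set $n=2k+u$, and apply the LES to
\[ 0\to \sC^{k-1}\to \sC^k\to \Sigma^{2k}K^{\oplus\binom{k+c-1}{c-1}}\to 0. \]
The inductive hypothesis at degrees $n$ and $n-1$ (corresponding to parameters $u+2$ and $u+1$ at stage $k-1$) gives $\HH{n}{\sff^{k-1}}=0=\HH{n-1}{\sff^{k-1}}$, so this LES collapses to
\[ 0\to \HH{n}{\sC^k}\xrightarrow{\HH{n}{\sg^k}} A_u^{\oplus\binom{k+c-1}{c-1}}\xrightarrow{\partial_n}\HH{n-1}{\sC^{k-1}}\to 0. \]
A snake-lemma computation using the explicit form of $\psi^k$, followed by the inductive isomorphism $\HH{n-1}{\sC^{k-1}}\cong \im([\zeta^{k-1}_{u+1}])$ realized through $\sg^{k-1}$, shows $\partial_n$ agrees with $[\zeta^{k-1}_{u+1}]$ up to sign. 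Consequently,
\[ \HH{n}{\sC^k}\cong \ker([\zeta^{k-1}_{u+1}])=\im([\zeta^k_u]), \]
the last equality being the exactness of Proposition~\ref{prop:zeta c}.

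Finally, to obtain $\HH{n}{\sff^k}=0$, we pass to the LES of $0\to \sC^k\to \sC^{k+1}\to \Sigma^{2k+2}K^{\oplus\binom{k+c}{c-1}}\to 0$: by the same identification, its connecting map $A_{u-1}^{\oplus\binom{k+c}{c-1}}\to \HH{n}{\sC^k}$ corresponds to $[\zeta^k_u]$ (composed with the isomorphism $\HH{n}{\sC^k}\cong\im([\zeta^k_u])$ just established), hence is surjective, and exactness yields $\HH{n}{\sff^k}=0$. The \emph{rows split} assertion for Figure~\ref{fig c} follows at once: the vanishing of $\HH{n}{\sff^{k-1}}$ for all relevant $n$ breaks each horizontal LES into short exact sequences. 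The principal technical obstacle is the snake-lemma identification of $\partial_n$ with $[\zeta^{k-1}_{u+1}]$ up to sign, which demands careful bookkeeping of the shifts $\Sigma^{k-1}$, the cone differential, and the inductive recognition of $\sg^{k-1}$ as the projection onto the top layer $A_{u+1}^{\oplus\binom{k+c-2}{c-1}}$ in which $\im([\zeta^{k-1}_{u+1}])$ lives.
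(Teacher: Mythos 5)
Your proof is correct and takes essentially the same approach as the paper's: induction on $k$, using the long exact sequence of the mapping cone, the commutative triangle identifying the connecting map with $[\zeta^{k-1}_{u+1}]$, and exactness of \eqref{ses codepth c} to obtain $\ker([\zeta^{k-1}_{u+1}])=\im([\zeta^k_u])$. Your exposition is modestly cleaner than the paper's in that you establish the isomorphism already at $k=0$ from the surjectivity of $[\zeta^0_u]$ (which follows from $A=\bigwedge^\bullet A_1$), letting the inductive step run uniformly from $k=0$ onward, whereas the paper proves only the vanishing $\HH{u}{\sff^0}=0$ at $k=0$ and then treats $k=1$ as a separate, nearly identical, snake-lemma step before stating the inductive step for $k\geq 1$. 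One small caveat: the parenthetical claiming both sides of the isomorphism vanish for all $u$ outside $[1,c]$ is not quite right for $u\leq -2$ (e.g.\ $\HH{0}{\sC^1}\cong A_0\neq 0$ while $\im([\zeta^1_{-2}])=0$); this imprecision is present in the lemma as stated in the paper as well and is harmless since only $1\leq u\leq c$ is used, but it is best not to appeal to Theorem~A here since that result is proved downstream of this lemma.
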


\begin{proof}
We prove the lemma by induction on $k \geq 0$. This can be done by diagram chasing on Figure~\ref{fig c} and we give full details here for completeness. 

For the base case $k=0$, we want to show $\HH{u}{\sff^{0}}=0$ for any $u\in\BZ$. The second row of Figure~\ref{fig c}, with $j=1$, yields the following commutative diagram with exact row and exact column:
\[
 \xymatrixcolsep{3pc}
 \xymatrix{  
 &
 &A_{u-2}^{\oplus{c+1\choose c-1}}\ar@{->}[d]_{[\zeta^{1}_{u-1}]}
 \ar@{->}[dl]_{\HH{u+1}{\psi^2}}
 &
 &
 \\
\HH{u+1}{\sC^0} \ar@{->}[r]^{\HH{u+1}{\sff^0}}
 &\HH{u+1}{\sC^1} \ar@{->}[r]^{\HH{u+1}{\sg^1}}
 & A_{u-1}^{\oplus{c\choose c-1}}\ar@{->>}[d]_{[\zeta^{0}_{u}]}\ar@{->}[r]^{\HH{u}{\psi^1}} 
 & \HH{u}{\sC^0}\ar@{->}[r]^{\HH{u}{\sff^0}} \ar@{->}[dl]^{\HH{u}{\sg^0}}_{\cong}
 & \HH{u}{\sC^1}
 \\
 &
 &A_{u}^{\oplus{c-1 \choose c-1}} \ar@{->}[d]
 &
 &
 \\
 &&
 0. &&
 }
\]
By commutativity of the bottom right triangle, we have $[\zeta^{0}_u] = \HH{u}{\sg^0} \circ \HH{u}{\psi^1}$. Since $\HH{u}{\sg^0}$ is an isomorphism and $[\zeta^{0}_u]$ is a surjection, this implies $\HH{u}{\psi^1}$ is a surjection. Hence, $\HH{u}{\sff^0}=0$ and the above row of long exact sequence splits for any $u\in\BZ$. This proves the claim for $k=0$. Moreover, for any $u\in\BZ$, we obtain the following diagram:

\[
 \xymatrixcolsep{3pc}
 \xymatrix{ 
 & 
 A_{u-2}^{\oplus{c+1\choose c-1}}
 \ar@{->}[d]^{}_{\HH{u+1}{\psi^2}} 
 \ar@{->}[r]^{[\zeta_{u-1}^{1}]}
 & A_{u-1}^{\oplus{c \choose c-1}} 
 \ar@{=}[d] 
 \ar@{->}[r]^{[\zeta_u^{0}]}
 &A_u^{\oplus{c-1\choose c-1}} 
 \ar@{->}[d]_{\cong}^{\HH{u}{\sg^0}^{-1}} 
 \ar@{->}[r] 
 & 0 
 \\
 0 
 \ar@{->}[r] 
 & 
 \HH{u+1}{\sC^1} 
 \ar@{->}[r]_{\HH{u+1}{\sg^1}} 
 & A_{u-1}^{\oplus{c\choose c-1}} 
 \ar@{->}[r]_{\HH{u}{\psi^1}} 
 & 
 \HH{u}{\sC^0} 
 \ar@{->}[r]
 & 0. 
 \\
 }
\]
By Snake Lemma $\HH{u+1}{\psi^2}$ is surjective and one can show that $\Ker(\HH{u+1}{\psi^2})=\Ker([\zeta_{u-1}^1])$. Therefore, $\HH{u+1}{\sC^1} \cong \im([\zeta_{u-1}^{1}])$ for any $u\in\BZ$. 
By exactness of the first row in Figure~\ref{fig c}, with $j=1$, we get $\HH{u+1}{\sff^1}=0$ for all $u\in\BZ$, hence the entire long exact sequence splits. This proves the claim for $k=1$.

By induction, for $k \geq 1$, assume $\HH{u}{\sff^{k}}=0$ and $\HH{2k+u-1}{\sC^k} \cong \im([\zeta_{u-1}^{k}])$, for  all $u \in \BZ$. We want to show $\HH{u}{\sff^{k+1}}=0$ and $\HH{2k+u}{\sC^{k+1}} \cong \im([\zeta_{u-2}^{k+1}])$. Figure~\ref{fig c} with $j=k$ yields the following commutative diagram with exact rows: 
\[
\xymatrixrowsep{2.9pc}
\xymatrixcolsep{4.2pc}
\xymatrix{
 \ar@{->}[r]^-{\HH{2k+u+1}{\sff^{k+1}}}
 & \HH{2k+u+1}{\sC^{k+2}}  \ar@{->}[r]^-{\HH{2k+u+1}{\sg^{k+2}}}
 & A_{u-3}^{\oplus{k+c+1 \choose c-1}}
 \ar@{->}[d]_-{[\zeta^{k+1}_{u-2}]}
 \ar@{->}[r]^-{\HH{2k+u}{\psi^{k+2}}} \ar@{->}[dl]_-{\HH{2k+u}{\psi^{k+2}}} 
 & \HH{2k+u}{\sC^{k+1}}
 \ar@{->}[r]^-{\HH{2k+u}{\sff^{k+1}}} \ar@{->}[dl]^-{\HH{2k+u}{\sg^{k+1}}} &
 \\
 \ar@{->}[r]^-{0} 
 & \HH{2k+u}{\sC^{k+1}}
 \ar@{^{(}->}[r]^-{\HH{2k+u}{\sg^{k+1}}}
 & A_{u-2}^{\oplus{k+c \choose c-1}}
 \ar@{->}[d]_-{[\zeta^{k}_{u-1}]}
 \ar@{->>}[r]^-{\HH{2k+u-1}{\psi^{k+1}}} 
 \ar@{->>}[dl]_-{\HH{2k+u-1}{\psi^{k+1}}}
 & \HH{2k+u-1}{\sC^{k}}
 \ar@{->}[r]^-{\HH{2k+u-1}{\sff^k}=0} \ar@{_{(}->}[dl]^-{\HH{2k+u-1}{\sg^{k}}}
 &
 \\
 \ar@{->}[r]^-{0}
 & \HH{2k+u-1}{\sC^{k}}  
 \ar@{^{(}->}[r]^-{\HH{2k+u-1}{\sg^{k}}}
 & A_{u-1}^{\oplus{k+c-1 \choose c-1}}
 \ar@{->>}[r]^-{\HH{2k+u-2}{\psi^{k}}} 
 & \HH{2k+u-2}{\sC^{k-1}}
 \ar@{->}[r]^-{0} 
 &
 }
\]
We obtain the following commutative diagram with exact rows:
\[
 \xymatrixcolsep{4pc}
 \xymatrix{ 
 & 
 A_{u-3}^{\oplus{k+c+1\choose c-1}}
 \ar@{->}[d]_-{\HH{2k +u}{\psi^{k+2}}}
 \ar@{->}[r]^-{[\zeta_{u-2}^{k+1}]}
 & A_{u-2}^{\oplus{k+c \choose c-1}} 
 \ar@{=}[d] 
 \ar@{->}[r]^-{[\zeta_{u-1}^{k}]}
 &\im([\zeta_{u-1}^{k}]) \ar@{.>}[d]_{\cong}^{} 
 \ar@{->}[r] 
 & 0 
 \\
 0 
 \ar@{->}[r] 
 & 
 \HH{2k+u}{\sC^{k+1}} 
 \ar@{->}[r]_-{\HH{2k+u}{\sg^{k+1}}} 
 & A_{u-2}^{\oplus{k+c \choose c-1}} 
 \ar@{->}[r]_-{\HH{2k+u-1}{\psi^{k+1}}} 
 & 
 \HH{2k+u-1}{\sC^k} 
 \ar@{->}[r]
 & 0. 
 \\
 }
\]
By Snake Lemma $\HH{2k+u}{\psi^{k+2}}$ is surjective and one can show that $\Ker(\HH{2k+u}{\psi^{k+2}})=\Ker([\zeta_{u-2}^{k+1}])$. Therefore, $\HH{2k+u}{\sC^{k+1}} \cong \im([\zeta_{u-2}^{k+1}])$ for any $u\in\BZ$. 
By exactness of the first row in Figure~\ref{fig c}, with $j=k+1$, we get $\HH{2k+u+1}{\sff^{k+1}}=0$ for all $u\in\BZ$, hence the entire first long exact sequence splits. The statement of the lemma now holds for all integers $k \geq 0$ and $u \in \mathbb Z$.
\end{proof}

For our main result, we now describe the homology of the mapping cones when applied to the Koszul complex in Setup \ref{setup codepth c}. 

\begin{theorem}
\label{homology c}
Let $(R,\fm,\sk)$ be a complete intersection local ring of codepth $c \geq 1$. Let $K$ be the Koszul complex of $R$ on a minimal set of generators of $\fm$ and $A=A_0\oplus A_1\oplus \cdots \oplus A_c$ be its homology. Under the Setup \ref{setup codepth c}, for any $k \geq 1$, we have 
\[\HH{i}{\sC^k} \cong \begin{cases}
\HH{i}{\sC^{k-1}}, & \quad 0\leq i \leq 2k-2 \text{ or } i \geq 2k+c+1 \\
0, & \quad i=2k-1 \text{ or } i=2k \\
\im([\zeta_u^{k}]), & \quad \text{for all}\  i=2k+u,\ \text{where}\ 1\leq u \leq c,\\
\end{cases}\]
where $[\zeta^k]$ are maps in the following exact sequence in Proposition~\ref{prop:zeta c}:
\begin{equation*}
    0\to A_0^{\oplus{k+c\choose c-1}}\xrightarrow{[\zeta^{k}_1]}A_1^{\oplus{k+c-1\choose c-1}} \xrightarrow{[\zeta^{k-1}_2]}A_2^{\oplus{k+c-2\choose c-1}} \to \cdots \xrightarrow{[\zeta^{k-c+1}_{c}]}A_c^{\oplus{k\choose c-1}} \to  0.
\end{equation*}
\end{theorem}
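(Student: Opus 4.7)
The plan is to analyze, for each degree $i$, the long exact sequence in homology induced by the short exact sequence of complexes in the lower row of \eqref{cone diagram c},
\[
0 \longrightarrow \sC^{k-1} \xrightarrow{\sff^{k-1}} \sC^{k} \xrightarrow{\sg^{k}} \Sigma^{2k} K^{\oplus\binom{k+c-1}{c-1}} \longrightarrow 0,
\]
in tandem with the identifications supplied by Lemma \ref{imzeta}. Since $\HH{i}{\Sigma^{2k} K^{\oplus\binom{k+c-1}{c-1}}} = A_{i-2k}^{\oplus\binom{k+c-1}{c-1}}$ vanishes outside the range $2k \leq i \leq 2k+c$, the proof naturally splits into four cases, one for each branch of the claimed formula.

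The two isomorphism branches are immediate from the long exact sequence. For $0 \leq i \leq 2k-2$, both indices $i-2k$ and $i+1-2k$ are negative, so the two terms flanking $\HH{i}{\sff^{k-1}}$ vanish and $\HH{i}{\sff^{k-1}}$ becomes an isomorphism, yielding $\HH{i}{\sC^k} \cong \HH{i}{\sC^{k-1}}$. The same vanishing argument (with indices now exceeding $c$) handles $i \geq 2k+c+1$. For $i = 2k+u$ with $1 \leq u \leq c$, Lemma \ref{imzeta} directly furnishes $\HH{2k+u}{\sC^k} \cong \im([\zeta_u^k])$, covering the third branch.

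For the vanishing branch $i \in \{2k-1,\,2k\}$, I would combine the vanishing $\HH{i}{\sff^{k-1}} = 0$ from Lemma \ref{imzeta} with exactness, which forces $\HH{i}{\sg^k}$ to be injective. When $i = 2k-1$ its codomain $A_{-1}^{\oplus\binom{k+c-1}{c-1}}$ is already zero, so $\HH{2k-1}{\sC^k} = 0$ at once. When $i = 2k$, the image of $\HH{2k}{\sg^k}$ equals $\Ker\bigl(\HH{2k-1}{\psi^k}\bigr)$; the commutative triangle of Figure \ref{fig c} identifies $\HH{2k-1}{\sg^{k-1}} \circ \HH{2k-1}{\psi^k}$ with $[\zeta_1^{k-1}]$, and since $[\zeta_1^{k-1}]$ is injective by Proposition \ref{prop:zeta c}, the connecting map $\HH{2k-1}{\psi^k}$ is itself injective, so $\HH{2k}{\sC^k} = 0$. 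The main obstacle I anticipate is cleanly pinning down this triangle-identification of $\HH{2k-1}{\psi^k}$ with $[\zeta_1^{k-1}]$, as this is the step that bridges the homological machinery of the iterated mapping cones with the algebraic exact sequence of Koszul homology from Proposition \ref{prop:zeta c}; once this is in place, the remaining assertions follow directly from the long exact sequence.
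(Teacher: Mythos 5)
Your proposal is correct and follows essentially the same route as the paper: the same long exact sequence from \eqref{cone diagram c} handles the two isomorphism branches, and Lemma~\ref{imzeta} handles $i = 2k+u$ for $1 \leq u \leq c$. The one place you work harder than necessary is the vanishing branch $i \in \{2k-1, 2k\}$. Your argument there (injectivity of $\HH{2k}{\sg^k}$, then the triangle identification $[\zeta_1^{k-1}] = \HH{2k-1}{\sg^{k-1}} \circ \HH{2k-1}{\psi^k}$, then injectivity of $[\zeta_1^{k-1}]$ from Proposition~\ref{prop:zeta c}) is valid, and the triangle identification you flag as the ``main obstacle'' is in fact already established in Setup~\ref{setup codepth c} and used in the proof of Lemma~\ref{imzeta}, so it is available to you. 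But Lemma~\ref{imzeta} is stated for \emph{all} $u \in \BZ$, so you can simply take $u=0$ and $u=-1$: since $\zeta_0 = 0$ and $\zeta_{-1} = 0$ by \eqref{zeta codepth c}, you immediately get $\HH{2k}{\sC^k} \cong \im([\zeta_0^k]) = 0$ and $\HH{2k-1}{\sC^k} \cong \im([\zeta_{-1}^k]) = 0$, which is exactly the paper's one-line dispatch of this case.
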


\begin{proof}
From the iterated mapping cone construction in Definition~\ref{def cone}, see \eqref{Cj}, observe that we have the following equality of complexes: 
\[\sC^{k}_{\leq k-1}=\sC^{k-1}_{\leq k-1},\] 
By \eqref{cone diagram c}, we have the induced long exact sequence of homology:
\[ 
A_{i-2k+1}^{\oplus{k+c-1\choose c-1}}=\HH{i+1}{\Sigma^{2k}K^{\oplus{k+c-1\choose c-1}}}\xra{}\HH{i}{\sC^k}\to\HH{i}{\sC^{k-1}}\to\HH{i}{\Sigma^{2k}K^{\oplus{k+c-1\choose c-1}}}=A_{i-2k}^{\oplus{k+c-1\choose c-1}}. 
\]
Recall that $A_\ell = \HH{\ell}{K} = 0$ for all $\ell < 0$ or $\ell > c$. Here, $i-2k>c \iff i-2k \geq c+1 \iff i \geq 2k+c+1$. Similarly, $i-2k+1<0 \iff i-2k+1 \leq -1 \iff i \leq 2k-2$. Therefore, for $0\leq i \leq 2k-2$ or $i \geq 2k+c+1$, it follows from the above exact sequence that $\HH{i}{\sC^k} \cong \HH{i}{\sC^{k-1}}$. The rest of statement  follows directly from Lemma~\ref{imzeta}. We note that by \eqref{zeta codepth c}, $\zeta_0=0$ and $\zeta_{-1}=0$.
\end{proof}

\section{An Application of the Homology of the Iterated Mapping Cone}
\label{sec:application}

\subsection{Minimal Free Resolution of the Residue Field Over a Complete Intersection Ring}
\label{subsec:resolution}

For a complete intersection $(R,\fm,\sk)$ of embedding dimension $n$ and codepth $c \geq 1$, the minimal free resolution of the residue field $\sk$ over $R$ is well-known and studied by others by using Tate's construction \cite[Theorems 4 and 6]{T}. In this section, independent from the Tate's construction, we use the mapping cone construction and its homology in Theorem~\ref{homology c}, applied on a sequence of complexes built from the Koszul complex $K$, to explicitly describe this resolution and recover the formula of the Poincar\'e series of $\sk$ over $R$.

 \begin{theorem}
   \label{ci codepth c}
   Let $(R,\fm,\sk)$ be a complete intersection local ring of codepth $c \geq 1$. Using the notation from \eqref{zeta codepth c}, let $(F,\partial^F)$ be the complex over $R$ given by: 
 \[
     F_i=K_i^{\oplus{ c-1 \choose c-1}} \oplus K^{\oplus{c \choose c-1}}_{i-2}\oplus K^{\oplus{c+1 \choose c-1}}_{i-4}\oplus\cdots \oplus K^{\oplus{c+j-1\choose c-1}}_{i-2j}\oplus\cdots\quad 
 \]
    for all $i \geq 0$, and the differential $\partial_i^F\colon F_i\to F_{i-1}$ is given by:
     \[
 \xymatrixrowsep{3pc}
 \xymatrixcolsep{5pc}
 \xymatrix{
 &K^{\oplus{c-1 \choose c-1}}_{i\hspace{0.3cm}}
 \ar@{}[d]_{\bigoplus}
 \ar@{->}[r]^{\partial^{\oplus{c-1\choose c-1}}_{i}}
 &K^{\oplus{c-1 \choose c-1}}_{i-1}\ar@{}[d]_{\bigoplus} \\
 &K^{\oplus{c \choose c-1}}_{i-2}\ar@{}[d]_{\bigoplus}
 \ar@{->}[ur]^{\zeta^0_{i-1}}
 \ar@{->}[r]^{\partial^{\oplus{c \choose c-1}}_{i-2}}
 &K^{\oplus{c \choose c-1}}_{i-3}\ar@{}[d]_{\bigoplus}\\
 &K^{\oplus{c+1 \choose c-1}}_{i-4}\ar@{}[d]_{\vdots}
 \ar@{->}[ur]^{\zeta^1_{i-3}}
 \ar@{->}[r]^{\partial^{\oplus{c+1 \choose c-1}}_{i-4}}
 &K^{\oplus{c+1 \choose c-1}}_{i-5}\ar@{}[d]_{\vdots}\\
 &&\\
&K^{\oplus{c+j-1 \choose c-1}}_{i-2j}\ar@{}[d]_{\vdots}
\ar@{->}[ur]^{\zeta^{j-1}_{i-2j+1}}
\ar@{->}[r]^{\partial^{\oplus{c+j-1 \choose c-1}}_{i-2j}}
&K^{\oplus{c+j-1 \choose c-1}}_{i-2j-1}\ar@{}[d]_{\vdots} \\
&&
 }
\]
Then $F$ is a minimal free resolution of the residue field $\sk$ over $R$.
Moreover, the Poincar\'e series of the residue field $\sk$ is given by: $\displaystyle P^R_{\sk}(t)=\frac{(1+t)^n}{(1-t^2)^c}.$
\end{theorem}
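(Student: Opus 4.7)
The plan is to recognize the complex $(F,\partial^F)$ as the limit mapping cone $\sC = \lim_{k\to\infty} \sC^k$ constructed in Setup~\ref{setup codepth c} from the Koszul complex, and then read off the homology from Theorem~\ref{homology c}.

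First I would unpack the underlying graded module. Unfolding the mapping cone definition gives
\[
\sC^k_i = (\Sigma^{k-1} C^k)_{i-1} \oplus \sC^{k-1}_i = K^{\oplus \binom{k+c-1}{c-1}}_{i-2k} \oplus \sC^{k-1}_i,
\]
so iterating and passing to the union $F = \sC = \bigcup_{k\geq 0} \sC^k$ yields precisely the direct sum $F_i = \bigoplus_{j \geq 0} K^{\oplus \binom{c+j-1}{c-1}}_{i-2j}$ appearing in the statement. The mapping cone differential produces diagonal blocks $\partial^{\oplus \binom{c+j-1}{c-1}}$ together with off-diagonal blocks coming from the lifting maps $\psi^{j+1}$. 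By the Proposition following Construction~\ref{def cone}, each $\psi^{j+1}$ is (up to sign, in each degree) $\varphi^{j+1} = \Sigma^j \zeta^{j+1}$ followed by the natural injection, so after tracking shifts these off-diagonal blocks match the $\zeta^{j-1}$ blocks displayed in the theorem.

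Next I would verify acyclicity and compute $\HH{0}{F}$. Since $F$ is the union of the $\sC^k$ along split injective chain maps $\sff^k$, we have $\HH{i}{F} = \varinjlim_k \HH{i}{\sC^k}$ with transition maps $\HH{i}{\sff^{k-1}}$. For $i=0$ the first case of Theorem~\ref{homology c} applies for every $k \geq 1$ (since $0 \leq 2k-2$), giving isomorphisms $\HH{0}{\sC^k} \cong \cdots \cong \HH{0}{K} = \sk$, hence $\HH{0}{F} = \sk$. For fixed $i \geq 1$, choose $k^* = \lceil i/2 \rceil$: the ``zero'' case of Theorem~\ref{homology c} gives $\HH{i}{\sC^{k^*}}=0$, and for $k > k^*$ one has $i \leq 2k-2$, putting us in the ``stable'' case, so $\HH{i}{\sff^{k-1}}$ is an isomorphism and the vanishing propagates. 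Therefore $\HH{i}{F}=0$ for all $i \geq 1$.

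Finally I would confirm minimality and compute the Poincar\'e series. The diagonal blocks are direct sums of the Koszul differential, whose entries are minimal generators of $\fm$; the off-diagonal blocks $\zeta^{j-1}$ are wedge-multiplications by the cycles $z_\ell$, whose entries lie in $\fm$ by the Convention ($\Ker(\partial_1) \subseteq \fm K_1$). Hence $\partial^F(F) \subseteq \fm F$, so $F$ is a minimal free resolution of $\sk$. Summing $\rank_R F_i = \sum_{j \geq 0} \binom{c+j-1}{c-1} \binom{n}{i-2j}$ and factoring the generating function gives
\[
P^R_{\sk}(t) = \left(\sum_{j \geq 0}\binom{c+j-1}{c-1} t^{2j}\right)(1+t)^n = \frac{(1+t)^n}{(1-t^2)^c}.
\]
The main obstacle is the bookkeeping in the second step: one must handle both parities of $i$ to select the $k^*$ at which the ``zero'' case kicks in, and then verify that the ``stable'' case of Theorem~\ref{homology c} propagates this vanishing through all larger $k$ and hence to the direct limit $F$.
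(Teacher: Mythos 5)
Your proposal is correct and follows essentially the same route as the paper: both identify $F$ with the limit mapping cone $\sC$ from Setup~\ref{setup codepth c}, deduce acyclicity from Theorem~\ref{homology c}, get minimality from the Convention that the entries of the cycles $z_j$ lie in $\fm$, and compute the Poincar\'e series by the same generating-function manipulation. The one small technical divergence is in how you pass from the homology of the finite-stage cones $\sC^k$ to that of $F$: the paper fixes $k=\lceil i/2\rceil+1$ and observes that $F_{i-1}$, $F_i$, $F_{i+1}$ literally coincide with $\sC^k_{i-1}$, $\sC^k_i$, $\sC^k_{i+1}$, so exactness of $F$ at degree $i$ is exactness of $\sC^k$ at degree $i$, no direct limit needed; you instead invoke $\HH{i}{F}=\varinjlim_k\HH{i}{\sC^k}$ and use the ``zero'' case at $k^*=\lceil i/2\rceil$ together with the ``stable'' case for $k>k^*$. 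Both are valid; the paper's version sidesteps the (standard but unstated) fact that homology commutes with the filtered colimit, while yours is slightly more transparent about why the vanishing at $k^*$ is the right thing to check. Either way, note that the propagation to $k>k^*$ does not actually depend on the isomorphism in the ``stable'' case being induced by $\sff^{k-1}$: once $\HH{i}{\sC^{k^*}}=0$, the abstract isomorphisms force all later terms to vanish, so the colimit is zero regardless of the transition maps.
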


\begin{proof}
Using Lemma \ref{zeta chain c}, we get $\partial_{i-1}^F\circ \partial_{i}^F=0$, and thus $\im(\partial_i^F) \subseteq \Ker(\partial_{i-1}^F)$ for all $i\geq 0$, so $F$ is a complex.
To show exactness of $F$ at each $i\geq 1$, we apply induction on $i$.
 The complex $F$ is exactly the iterated mapping cone complex $\sC$ defined in \eqref{eq cone}, see Setup \ref{setup codepth c}. In particular we have
\begin{equation*}
  F_i=\sC^j_i,\quad\text{for all}\quad j\geq \left\lceil\frac{i+1}{2}\right\rceil.
\end{equation*}
Observe that for $k=\left\lceil\frac{i}{2}\right\rceil+1$, we have $F_{i+1} = M^k_{i+1}$, $F_{i} = M^k_{i}$, and $F_{i-1} = M^k_{i-1}$. Therefore, it is enough to show  exactness of the following sequence for each $i \geq 1$:
\begin{equation*}
  \sC^{k}_{i+1}\to\sC^k_{i}\to\sC^k_{i-1}, \quad\text{for}\quad k=\left\lceil\frac{i}{2}\right\rceil+1,
\end{equation*}
that is, for each $i \geq 1$ we want to show 
$\HH{i}{\sC^k}=0$.
For $i$ is even or odd, this translates to showing that for all $j\geq 2$ we have the equalities:
$\HH{2j-3}{\sC^j}=0=\HH{2j-2}{\sC^j},$
which hold by Theorem~\ref{homology c}.

As remarked in Section~\ref{sec:Koszul}, all entries of the vectors $z_j \in K_1=R^n$ are in the maximal ideal $\fm$. By \eqref{zeta codepth c}, all the differential maps $\partial^F$ of the resolution $F$ have entries in $\fm$. Hence, $F$ is a minimal free resolution of $\sk$. 

For the Poincar\'e statement in the theorem, if we write $P^R_{\sk}(t)=\sum_{i=0}^{\infty}\beta_it^i,$ then we get
\[ 
\beta_i=\rank_R F_i=\sum_{j\geq 0}(\rank_R K_{i-2j}){j+c-1\choose c-1}=\sum_{j\geq 0}{n\choose i-2j}{j+c-1\choose c-1}.
\]
This is exactly the coefficient of $t^i$ in the series:
\begin{align*}
    \frac{(1+t)^n}{(1-t^2)^c}&=
    \Big(\sum_{\ell\geq 0}{n\choose \ell}t^\ell \Big)\cdot\Big(\sum_{j\geq 0}{j+c-1\choose c-1}t^{2j}\Big)
\end{align*}
which proves the desired equality.
\end{proof}

\subsection{DG Algebra Structure on the Minimal Free Resolution} 
\label{subsec:DGA}

Suppose $R$ is a complete intersection ring of embedding dimension $n$ and codepth $c$. By Tate's construction, the minimal free resolution of $\sk$ over $R$ has a differential graded (DG) algebra structure \cite[Theorem 1]{T}. For completeness, we describe here the DG algebra structure, which is naturally inherited from that of the Koszul complex $(K,\partial)$, of the minimal free resolution $F$ in Theorem~\ref{ci codepth c}

Let $\{e_1,\dots, e_n\}$ be a basis for the free $R$-module $K_1$. It suffices to describe the multiplication in $F$ on a basis. Consider a basis element $\bsa$ in $F_i$ that corresponds to a basis element in $K_{i-2j}^{\oplus{c+j-1\choose c-1}}$ that has the element $e_{s_1}\wedge \dots \wedge e_{s_{i-2j}}$ in the position $\ell_1\dots\ell_j$ in the tuple. Similarly, we consider a basis element $\bsb$ in $F_{i'}$ that corresponds to a basis element in $K_{i'-2j'}^{\oplus{c+j'-1\choose c-1}}$ that has the element $e_{s'_1}\wedge \dots \wedge e_{s'_{i'-2j'}}$ in the position $\ell'_1\dots\ell'_{j'}$ in the tuple. The product $\bsa\bsb$ is the basis element in $F_{i+i'}$ that corresponds to the basis element in $K_{(i+i')-2(j+j')}^{\oplus{c+(j+j')-1\choose c-1}}$ that has the element 
 $e_{s_1}\wedge \dots \wedge e_{s_{i-2j}} \wedge e_{s'_1}\wedge \dots \wedge e_{s'_{i'-2j'}}$
 in the position $[\ell_1\dots\ell_j\ell'_1\dots\ell'_{j'}]$. 

We check the differential maps $\partial^F$ respect the graded Leibniz rule:
\[\partial^F(\bsa\bsb) = \partial^F(\bsa) \, \bsb + (-1)^i \bsa \, \partial^F(\bsb),\]
by comparing the entries in corresponding positions. 
The differential map $\partial^F $ sends each element in two ways, via $\partial$ and via $\zeta$. 

\begin{align*}
&\text{Element}&&\text{Position}\\ \hline \\
&z_{\ell_r}\wedge e_{s_1}\wedge \dots\wedge e_{s_{i-2j}}\wedge e_{s'_1}\wedge \dots \wedge e_{s'_{i'-2j'}}
&& [\ell_1\dots \check{\ell_r}\dots \ell_j\ell'_1\dots\ell'_{j'}]\\
&\sum_{t=1}^{i-2j}(-1)^t x_{s_t} e_{s_1}\wedge \dots\wedge \check{e}_{s_t}\wedge\dots e_{s_{i-2j}} \wedge e_{s'_1}\wedge \dots \wedge e_{s'_{i'-2j'}}
&&[\ell_1\dots\ell_j\ell'_1\dots\ell'_{j'}]\\
&(-1)^i \, e_{s_1}\wedge \dots \wedge e_{s_{i-2j}} \wedge z_{\ell'_{r'}}\wedge e_{s'_1}\wedge \dots\wedge e_{s'_{i'-2j'}}
&& [\ell_1\dots \ell_j\ell'_1\dots \check{\ell}'_{r'}\dots\ell'_{j'}]\\
&(-1)^i \, e_{s_1}\wedge \dots \wedge e_{s_{i-2j}} \wedge \sum_{t'=1}^{i'-2j'}(-1)^{t'} x_{s'_t} e_{s'_1}\wedge \dots\wedge \check{e}_{s'_{t'}}\wedge\dots e_{s'_{i'-2j'}}  
&&  [\ell_1\dots\ell_j\ell'_1\dots\ell'_{j'}]\\
\hline \\
&z_{\ell_r}\wedge e_{s_1}\wedge \dots\wedge e_{s_{i-2j}}\wedge e_{s'_1}\wedge \dots \wedge e_{s'_{i'-2j'}}
&& [\ell_1\dots \check{\ell_r}\dots \ell_j\ell'_1\dots\ell'_{j'}]\\
& z_{\ell_{r'}}\wedge e_{s_1}\wedge \dots \wedge e_{s_{i-2j}} \wedge e_{s'_1}\wedge \dots\wedge e_{s'_{i'-2j'}}&& [\ell_1\dots \ell_j\ell'_1\dots \check{\ell}'_{r'}\dots\ell'_{j'}]\\
&\sum_{t=1}^{i-2j}(-1)^t x_{s_t} e_{s_1}\wedge \dots\wedge \check{e}_{s_t}\wedge\dots e_{s_{i-2j}} \wedge e_{s'_1}\wedge \dots \wedge e_{s'_{i'-2j'}}
&&[\ell_1\dots\ell_j\ell'_1\dots\ell'_{j'}]\\
&\sum_{t'=1}^{i'-2j'}(-1)^{i+t'} x_{s'_t}\, e_{s_1}\wedge \dots \wedge e_{s_{i-2j}} \wedge e_{s'_1}\wedge \dots\wedge \check{e}_{s'_{t'}}\wedge\dots e_{s'_{i'-2j'}}  &&[\ell_1\dots\ell_j\ell'_1\dots\ell'_{j'}]
\\
\hline 
\end{align*}

In the first block of expressions above, the first line with $1\leq r \leq j$  represents $\zeta(\bsa) \wedge \bsb$, the second line represents $\partial(\bsa) \wedge \bsb$, the third line with $1\leq r'\leq j'$ represents $(-1)^i \bsa \wedge \zeta(\bsb)$, and the fourth line represent $(-1)^i \bsa \wedge \partial(\bsb)$, where the notation $\check{(-)}$ means we remove the element from the expression. This first block together gives $\partial^F(\bsa)\, \bsb + (-1)^i \bsa \, \partial^F(\bsb)$. 

On the other hand, in the second block, for $1\leq r \leq j$ and $1\leq r'\leq j'$, the first and second lines represent $\zeta(\bsa\bsb)$, the third and fourth lines represent $\partial(\bsa\bsb)$. The second block together gives $\partial^F(\bsa\bsb)$. Now, comparing both blocks of expressions and noting that all the other components are zero, we see that the Leibniz rule holds with respect to $\partial^F$:
\[\partial^F(\bsa\bsb) = \partial^F(\bsa) \, \bsb + (-1)^i \bsa \, \partial^F(\bsb).\]
Hence, the defined multiplication gives a DG algebra structure on the resolution $F$ in Theorem~\ref{ci codepth c}.

\subsection{Examples}
\label{subsec:examples}

Since $K_i=0$ for $i<0$ and $i>c$, the module $F_i$ in Theorem~\ref{ci codepth c} is a finite direct sum of Koszul components. Using appropriate bases of $F_i$, the differential map $\partial_i^F$ can be expressed as block matrices. We illustrate these block maps through examples of complete intersection rings of codepth 2 and of codepth 3, and observe combinatorial and visual patterns for the differential maps $\partial_{\text{even}}^F$ and $\partial_{\text{odd}}^F$. Some computations here were verified by {\tt Macaulay2} \cite{M2}.

In the following examples, we consider a complete intersection ring $R=Q/I$, where $Q=\sk[x,y,z]_{(x,y,z)}$ with maximal ideal $\fn$ and  $I\subset\fn^2$, of embedding dimension three. The Koszul complex $(K,\partial)$ on the minimal set of generators $\{x,y,z\}$ of $\fn/I$ is given by
\[
0\to R\xra{\partial_3} R^{\oplus 3}\xra{\partial_2} R^{\oplus 3} \xra{\partial_1} R,
\]
where
\[
\partial_1=
\begin{pmatrix}
x&y&z    
\end{pmatrix}, \quad 
\partial_2=
\begin{pmatrix}
-y&-z& 0\\
 x& 0& -z\\
 0& x&  y
\end{pmatrix}, \quad
\partial_3=
\begin{pmatrix}
z\\
-y\\
x
\end{pmatrix}
\]
with respect to the following bases for $K$:
\begin{align*}
\{1\}, &\qquad\text{for}\ K_0=R\\
\{e_1,\ e_2,\ e_3\}, &\qquad\text{for}\ K_1=R^{\oplus 3}\\    
\{e_{12},\ e_{13},\ e_{23}\}, &\qquad\text{for}\ K_2=R^{\oplus 3}\\ 
\{e_{123}\}, &\qquad\text{for}\ K_3=R,   
\end{align*}
where $e_{ij} = e_i \wedge e_j$ for $1 \leq i,j,k \leq 3$, and similarly $e_{123}=e_1\wedge e_2\wedge e_3$.    
We visually express the differential  maps as blocks and will use them in the desired minimal resolution of $\sk$:
\[\partial_1 = \begin{tikzpicture}[scale=0.25]
\filldraw[draw=black,color=black!20!white] (0,0) rectangle (3,1);
\end{tikzpicture}, \qquad \qquad
\partial_2 = \begin{tikzpicture}[scale=0.25]
\filldraw[draw=black,color=black!50!white] (0,0) rectangle (3,3);
\end{tikzpicture}, \qquad \qquad 
\partial_3 = \begin{tikzpicture}[scale=0.25]
\filldraw[draw=black,color=black!80!white] (0,0) rectangle (1,3);
\end{tikzpicture}.
\]

\bigskip

\noindent
{\bf Codepth 2 Example.}  If $(R,\fm,\sk)$ is  a complete intersection of codepth two, then by Theorem~\ref{ci codepth c} a minimal free resolution $F$ of the residue field $\sk$ over $R$ has: 
\[
 F_i=K_i
     \oplus K^{\oplus{2}}_{i-2}
      \oplus K^{\oplus{3}}_{i-4}\oplus\cdots 
      \oplus K^{\oplus{j+1}}_{i-2j}\oplus\cdots,  \qquad \text{ for all } i\geq 0.
\]

In the following example, we illustrate $\zeta_u^k$, $\partial_i$ and $\partial_i^F$ in Theorem~\ref{ci codepth c} as block matrices.

\begin{example} 
\label{exp:codepth2 edim3}
Consider the complete intersection  ring $R=Q/(x^2, y^2+z^2)$ of embedding dimension three and codepth two. The homology $A_1$ is generated by the classes of the following elements in $K_1$:
\[
z_1=
\begin{pmatrix}
  x\\
  0\\
  0
\end{pmatrix}=xe_1
\qquad
\text{and}
\qquad
z_2=\begin{pmatrix}
    0\\
    y\\
    z
\end{pmatrix}=ye_2+ze_3.
\]
For each integer $k \geq 0$ and $1\leq u\leq 3$, we build $\zeta_u^{k}\colon K_{u-1}^{\oplus{k+2}}\to K_u^{\oplus{k+1}}$ inductively from $\zeta_u^0\colon K_{u-1}^{\oplus 2} \to K_u$ as follows. By \eqref{zeta codepth c}, $\zeta_u^0$ is given by $\begin{pmatrix} z_1 \wedge - & z_2 \wedge - \end{pmatrix}$ over the standard basis of $K_u=R^{\oplus {3 \choose u}}$. 

For $u=1$, the columns of $\zeta_1^0\colon R^{\oplus 2} \to R^{\oplus 3}$ are exactly the vectors $z_1$ and $z_2$, so it is:
\[
\zeta_1^0=
\begin{pmatrix}
     x& 0 \\
     0& y\\
     0& z
\end{pmatrix}.\]
For simplicity, we denote $\zeta_1^0$ as a $3 \times 2$ block. Then $\zeta_1^1$ as a $6 \times 3$ block and $\zeta_1^2$ as a $9 \times 4$ block are:
\[
\zeta_1^0 = \begin{tikzpicture}[scale=0.25]
\draw[pattern=dots, pattern color=black] (0,0) rectangle (2,3);
\end{tikzpicture}, 
\qquad \qquad 
\zeta_1^1 = 
\begin{tikzpicture}[scale=0.25]
\draw (0,0) rectangle (3,-6);
\draw[pattern=dots, pattern color=black] (0,0) rectangle (2,-3);
\draw[pattern=dots, pattern color=black] (1,-3) rectangle (3,-6);
\end{tikzpicture}, 
\qquad \qquad 
\zeta_1^2 = 
\begin{tikzpicture}[scale=0.25]
\draw (0,0) rectangle (4,-9);
\draw[pattern=dots, pattern color=black] (0,0) rectangle (2,-3);
\draw[pattern=dots, pattern color=black] (1,-3) rectangle (3,-6);
\draw[pattern=dots, pattern color=black] (2,-6) rectangle (4,-9);
\end{tikzpicture}.
\]
The pattern for $\zeta_1^k$ continues this way, where the blank entries are filled with $0$ and each $3 \times 2$ block is shifted to the right by one entry. 

For $u=2$, the columns of  $\zeta_2^0\colon (R^{\oplus 3})^{\oplus 2} \to R^{\oplus 3}$ are given by 
$
z_1\wedge e_1,z_1\wedge e_2,z_1\wedge e_3; \,
z_2\wedge e_1,z_2\wedge e_2,z_2\wedge e_3.
$
Using the computations
\begin{align*}
  z_1\wedge e_1 &= 0&   
  z_1\wedge e_2 &= xe_{12}&   
  z_1\wedge e_3 &= xe_{13}&   
  \\
  z_2\wedge e_1 &= -ye_{12}-ze_{13}& 
  z_2\wedge e_2 &= -ze_{23}&   
  z_2\wedge e_3 &= ye_{23},&  
\end{align*}
we get:
\[
\zeta_2^0=
\left(\begin{array}{ccc|ccc}
    0&x &0& -y& 0& 0\\
    0&0 &x& -z& 0& 0\\
    0&0 &0& 0& -z& y
\end{array}\right).
\]
For simplicity, we denote $\zeta_2^0$ as a $3\times 6$ block. Then $\zeta_2^1$ as a $6\times 9$ block and $\zeta_2^2$ as a $9 \times 12$ block are:
\[
\zeta_2^0 = \begin{tikzpicture}[scale=0.25]
\draw[pattern=north east lines, pattern color=black] (0,0) rectangle (6,-3);
\end{tikzpicture}, 
\qquad  
\zeta_2^1 = 
\begin{tikzpicture}[scale=0.25]
\draw (0,0) rectangle (9,-6);
\draw[pattern=north east lines, pattern color=black] (0,0) rectangle (6,-3);
\draw[pattern=north east lines, pattern color=black] (3,-3) rectangle (9,-6);
\end{tikzpicture}, 
\qquad  
\zeta_2^2 = 
\begin{tikzpicture}[scale=0.25]
\draw (0,0) rectangle (12,-9);
\draw[pattern=north east lines, pattern color=black] (0,0) rectangle (6,-3);
\draw[pattern=north east lines, pattern color=black] (3,-3) rectangle (9,-6);
\draw[pattern=north east lines, pattern color=black] (6,-6) rectangle (12,-9);
\end{tikzpicture}. 
\]
The pattern for $\zeta_2^k$ continues this way, where the blank entries are filled with $0$ and each $3 \times 6$ block is shifted to the right by three entries. 

For $u=3$, the columns of  
$\zeta_3^0\colon (R^{\oplus 3})^{\oplus 2} \to R$ 
are given by 
$
z_1\wedge e_{12},z_1\wedge e_{13},z_1\wedge e_{23}; \,
z_2\wedge e_{12},z_2\wedge e_{13},z_2\wedge e_{23}. 
$
Using the computations
\begin{align*}
  z_1\wedge e_{12} &= 0&   
  z_1\wedge e_{13} &= 0&   
  z_1\wedge e_{23} &= xe_{123}  
  \\
  z_2\wedge e_{12} &= ze_{123}&   
  z_2\wedge e_{13} &= -ye_{123}&   
  z_2\wedge e_{23} &= 0,
\end{align*}
we get:
\[
\zeta_3^0=
\left(
\begin{array}{ccc|ccc}
   0&0&x &z&-y&0 
\end{array}\right).
\]
For simplicity, we denote $\zeta_3^0$ as a $1 \times 6$ block. Then $\zeta_3^1$ as a $2\times 9$ block and $\zeta_3^2$ as a $3 \times 12$ block are:
\[
\zeta_3^0 = \begin{tikzpicture}[scale=0.25]
\draw[pattern=crosshatch, pattern color= gray] (0,0) rectangle (6,-1);
\end{tikzpicture}, 
\quad  
\zeta_3^1 = 
\begin{tikzpicture}[scale=0.25]
\draw (0,0) rectangle (9,-2);
\draw[pattern=crosshatch, pattern color= gray] (0,0) rectangle (6,-1);
\draw[pattern=crosshatch, pattern color= gray] (3,-1) rectangle (9,-2);
\end{tikzpicture}, 
\quad
\zeta_3^2 = 
\begin{tikzpicture}[scale=0.25]
\draw (0,0) rectangle (12,-3);
\draw[pattern=crosshatch, pattern color= gray] (0,0) rectangle (6,-1);
\draw[pattern=crosshatch, pattern color= gray] (3,-1) rectangle (9,-2);
\draw[pattern=crosshatch, pattern color= gray] (6,-2) rectangle (12,-3);
\end{tikzpicture}. 
\]
The pattern for $\zeta_3^k$ continues this way, where the blank entries are filled with $0$ and each $1 \times 6$ block is shifted to the right by three entries.

Now, we put together the above block maps and express the differential maps $\partial_i^F$ in the minimal free resolution $F$ of $\sk$ over $R$ as blocks, where the blank entries are filled with 0:
\begin{align*}
\partial^F_1 &=  \begin{tikzpicture}[scale=0.25]
\filldraw[draw=black,color=black!20!white] (0,0) rectangle (3,1);
\end{tikzpicture},
&
\partial^F_2&= 
\begin{tikzpicture}[scale=0.25]
\filldraw[draw=black,color=black!50!white] (0,0) rectangle (3,3);
\draw[pattern=dots, pattern color=black] (3,0) rectangle (5,3);
\end{tikzpicture},
\\
\partial^F_3&= \begin{tikzpicture}[scale=0.25]
\filldraw[draw=black,color=black!80!white] (0,0) rectangle (1,-3);
\draw[pattern=north east lines, pattern color=black] (1,0) rectangle (7,-3);
\draw(0,-3) rectangle (1,-5);
\draw(1,-3) rectangle (7,-5);
 \filldraw[draw=black,color=black!20!white] (1,-3) rectangle (4,-4);
 \filldraw[draw=black,color=black!20!white] (4,-4) rectangle (7,-5);
\end{tikzpicture},
&
\partial^F_4&= \begin{tikzpicture}[scale=0.25]
\draw[pattern=crosshatch, pattern color= gray] (0,0) rectangle (6,-1);
\draw(6,0) rectangle (9,-1);
\filldraw[draw=black,color=black!50!white] (0,-1) rectangle (3,-4);
\filldraw[draw=black,color=black!50!white] (3,-4) rectangle (6,-7);
\draw (0,-1) rectangle (6,-7);
\draw[pattern=dots, pattern color=black] (6,-1) rectangle (8,-4);
\draw[pattern=dots, pattern color=black] (7,-4) rectangle (9,-7);
\draw (6,-1) rectangle (9,-7);
\end{tikzpicture},
\\
\partial^F_5 &=
\begin{tikzpicture}[scale=0.25]
\draw(0,0) rectangle (2,-6);
 \filldraw[draw=black,color=black!80!white] (0,0) rectangle (1,-3);
 \filldraw[draw=black,color=black!80!white] (1,-3) rectangle (2,-6);
\draw(0,-6) rectangle (2,-9); 
\draw(2,0) rectangle (11,-6);
 \draw[pattern=north east lines, pattern color=black] (2,0) rectangle (8,-3);
 \draw[pattern=north east lines, pattern color=black] (5,-3) rectangle (11,-6);
\draw(2,-6) rectangle (11,-9);
 \filldraw[draw=black,color=black!20!white] (2,-6) rectangle (5,-7);
 \filldraw[draw=black,color=black!20!white] (5,-7) rectangle (8,-8);
 \filldraw[draw=black,color=black!20!white] (8,-8) rectangle (11,-9);
\end{tikzpicture},
&
\partial^F_6 &=
\begin{tikzpicture}[scale=0.25]
\draw(0,0) rectangle (9,-2);
 \draw[pattern=crosshatch, pattern color= gray] (0,0) rectangle (6,-1);
 \draw[pattern=crosshatch, pattern color= gray] (3,-1) rectangle (9,-2);
\draw(9,0) rectangle (13,-2);
\draw(0,-2) rectangle (9,-11);
 \filldraw[draw=black,color=black!50!white] (0,-2) rectangle (3,-5);
 \filldraw[draw=black,color=black!50!white] (3,-5) rectangle (6,-8);
 \filldraw[draw=black,color=black!50!white] (6,-8) rectangle (9,-11);
 \draw (9,-2) rectangle (13,-11);
 \draw[pattern=dots, pattern color=black] (9,-2) rectangle (11,-5);
 \draw[pattern=dots, pattern color=black] (10,-5) rectangle (12,-8);
 \draw[pattern=dots, pattern color=black] (11,-8) rectangle (13,-11);
\end{tikzpicture}.
\end{align*}
\end{example}

Observe that for codepth 2, the differential maps $\partial^F_i$ for $i\geq 3$ can be built from two sets of matrices: $\{\partial_1,\partial_2,\partial_3\}$ which is independent of the ideal $I$,  and $\{\zeta_1^0, \zeta_2^0,\zeta_3^0\}$, as follows:
\begin{itemize}
    \item[--] When $i=2k+3$, we need $\partial_1$, $\partial_3$, and $\zeta_2^0$ as building blocks: $\partial_1$ appears diagonally $(k+2)$ times, $\partial_3$ appears diagonally $(k+1)$ times, and $\zeta_2^k$ is built from $\zeta_2^0$ $(k+1)$ times, where each $\zeta_2^0$ is shifted to the right three units.
    \item[--] When $i=2k+4$, we need $\partial_2$, $\zeta_1^0$, and $\zeta_3^0$ as building blocks, with $\partial_2$ appears diagonally $(k+2)$ times, $\zeta_1^k$ is built from $\zeta_1^0$ $(k+2)$ times, where each $\zeta_1^0$ is shifted to the right one unit, and $\zeta_3^k$ is built from $\zeta_3^0$ $(k+1)$ times, where each $\zeta_3^0$ is shifted to the right three units.
\end{itemize}

\bigskip

\noindent
{\bf Codepth 3 Example.}  If $(R,\fm,\sk)$ is a complete intersection of codepth three, then by Theorem~\ref{ci codepth c} a minimal free resolution $F$ of the residue field $\sk$ over $R$ has: 
\[F_i=K_i
     \oplus K^{\oplus{3}}_{i-2}
     \oplus K^{\oplus{6}}_{i-4}\oplus\cdots 
     \oplus K^{\oplus{j+2\choose 2}}_{i-2j}\oplus\cdots, \qquad \text{ for all } i\geq 0.
\]

\begin{remark}
\label{rem:spread} 
In the case $\codepth R=3=\edim R$, for each integer $k \geq 0$ and $1\leq u\leq 3$, by using the standard basis of $K_u=R^{\oplus{3\choose u}}$, we identify the map $\zeta_u^k \colon K_{u-1}^{\oplus{k+3 \choose 2}} \to K_u^{\oplus{k+2 \choose 2}}$ with its corresponding matrix of size ${3\choose u}{k+2\choose 2} \times {3\choose u-1}{k+3\choose 2}$. It can be built from the matrix corresponding to $\zeta_u^0$ as follows. 
\begin{itemize}
 \item[--] By \eqref{zeta codepth c}, $\zeta_u^0$ is given by $\begin{pmatrix} z_1 \wedge - & z_2 \wedge - & z_3 \wedge - \end{pmatrix}$ over the standard basis of $K_u=R^{\oplus {3 \choose u}}$. 
 \item[--] For each $1 \leq u \leq 3$, the ${3\choose u} \times 3{3\choose u-1}$ matrix  $\zeta_u^0$ is broken into two matrices:  
 
 \hspace{0.2in} $(\zeta_u^0)_1$ of size ${3\choose u} \times {3\choose u-1}$ that corresponds to $(z_1 \wedge -)$ , and \vspace{0.05in}
 
 \hspace{0.2in} $(\zeta_u^0)_2$ of size ${3\choose u} \times 2{3\choose u-1}$ that corresponds to $(z_2 \wedge - \quad z_3 \wedge -)$.
 \item[--] Let $(\zeta_u^0)'$ be the matrix of all zeros of the same size as $(\zeta_u^0)_1$.
 \item[--] For each $k\geq 1$, we construct a new matrix $\widetilde{\zeta}_u^k$ from $\zeta_u^0$ by inserting $k$ blocks of $(\zeta_u^0)'$ horizontally in between the two blocks $(\zeta_u^0)_1$ and $(\zeta_u^0)_2$.
 \item[--] For $k \geq 1$, each $\zeta_u^k$ is constructed inductively from $\zeta_u^{k-1}$ as follows: start with $\zeta_u^{k-1}$, then stack underneath it  $k+1$ blocks of $\widetilde{\zeta}_u^k$, where each block $\widetilde{\zeta}_u^k$ is shifted to the right by ${3\choose u-1}$ entries.
 \item[--] All the blank entries in the matrix $\zeta_u^k$ are filled with zeros. 
\end{itemize}
\end{remark}

In the following example, we illustrate $\zeta_u^k$, $\partial_i$ and $\partial_i^F$ in Theorem~\ref{ci codepth c} as block matrices.

\begin{example} 
\label{exp:codepth3 edim3}
Consider the complete intersection ring $R=Q/(x^2+y^2, xz, z^2+xy)$ of embedding dimension three and codepth three. The homology $A_1$ is generated by the classes of the following elements in $K_1$:
\[
z_1=
\begin{pmatrix}
  x\\
  y\\
  0
\end{pmatrix}=xe_1+ye_2,
\quad
z_2=\begin{pmatrix}
    0\\
    0\\
    x
\end{pmatrix}=xe_3,
\quad
\text{and}
\quad
z_3=\begin{pmatrix}
    0\\
    x\\
    z
\end{pmatrix}=xe_2+ze_3.
\]

For $u=1$, the columns of $\zeta_1^0\colon R^{\oplus 2} \to R^{\oplus 3}$ are exactly the vectors $z_1, z_2$, and $z_3$ so it is:
\[
\zeta_1^0=
\begin{pmatrix}
     x& 0 & 0\\
     y& 0 & x\\
     0& x & z
\end{pmatrix}. \]
For simplicity, we denote $\zeta_1^0$ as a $3 \times 3$ block. Then $\zeta_1^1$ as a $9 \times 6$ block and $\zeta_1^2$ as a $18 \times 10$ block are:
\[
\zeta_1^0 = \begin{tikzpicture}[scale=0.25]
\draw[pattern=dots, pattern color=black] (0,0) rectangle (3,3);
\end{tikzpicture} 
= \begin{tikzpicture}[scale=0.25]
\draw[pattern=dots, pattern color=black] (0,0) rectangle (3,3);
\draw (1,0) -- (1,3);
\end{tikzpicture}, 
\qquad \qquad 
\zeta_1^1 = 
\begin{tikzpicture}[scale=0.25]
\draw (0,0) rectangle (6,-9);
 \draw[pattern=dots, pattern color=black] (0,0) rectangle (3,-3);
 \draw (1,0) -- (1,-3);
 \draw[pattern=dots, pattern color=black] (1,-3) rectangle (2,-6);
 \draw (2,-3) rectangle (3,-6);
 \draw[pattern=dots, pattern color=black] (3,-3) rectangle (5,-6);
 \draw[pattern=dots, pattern color=black] (2,-6) rectangle (3,-9);
 \draw (3,-6) rectangle (4,-9);
 \draw[pattern=dots, pattern color=black] (4,-6) rectangle (6,-9);
\end{tikzpicture}, 
\qquad \qquad 
\zeta_1^2 = 
\begin{tikzpicture}[scale=0.25]
\draw (0,0) rectangle (10,-18);
 \draw[pattern=dots, pattern color=black] (0,0) rectangle (3,-3);
 \draw (1,0) -- (1,-3);
 \draw[pattern=dots, pattern color=black] (1,-3) rectangle (2,-6);
 \draw (2,-3) rectangle (3,-6);
 \draw[pattern=dots, pattern color=black] (3,-3) rectangle (5,-6);
 \draw[pattern=dots, pattern color=black] (2,-6) rectangle (3,-9);
 \draw (3,-6) rectangle (4,-9);
 \draw[pattern=dots, pattern color=black] (4,-6) rectangle (6,-9);
 \draw[pattern=dots, pattern color=black] (3,-9) rectangle (4,-12);
 \draw (4,-9) rectangle (6,-12);
 \draw[pattern=dots, pattern color=black] (6,-9) rectangle (8,-12);
 \draw[pattern=dots, pattern color=black] (4,-12) rectangle (5,-15);
 \draw (5,-12) rectangle (7,-15);
 \draw[pattern=dots, pattern color=black] (7,-12) rectangle (9,-15);
 \draw[pattern=dots, pattern color=black] (5,-15) rectangle (6,-18);
 \draw (6,-15) rectangle (8,-18);
 \draw[pattern=dots, pattern color=black] (8,-15) rectangle (10,-18);
\end{tikzpicture}. 
\]
The pattern for $\zeta_1^k$ continues this way, where the blank entries are filled with $0$; see Remark \ref{rem:spread}.

For $u=2$, the columns of  $\zeta_2^0\colon (R^{\oplus 3})^{\oplus 3} \to R^{\oplus 3}$ are given by 
$
z_1\wedge e_1,z_1\wedge e_2,z_1\wedge e_3; \,
z_2\wedge e_1,z_2\wedge e_2,z_2\wedge e_3; \,
z_3\wedge e_1,z_3\wedge e_2,z_3\wedge e_3.
$
Using the computations
\begin{align*}
  z_1\wedge e_1 &= -ye_{12}&   
  z_1\wedge e_2 &= xe_{12}&   
  z_1\wedge e_3 &= xe_{13}+ye_{23}&   
  \\
  z_2\wedge e_1 &= -xe_{13}& 
  z_2\wedge e_2 &= -xe_{23}&   
  z_2\wedge e_3 &= 0&  
  \\
  z_3\wedge e_1 &= -xe_{12}-ze_{13}& 
  z_3\wedge e_2 &= -ze_{23}&   
  z_3\wedge e_3 &= xe_{23},&  
\end{align*}
we get:
\[
\zeta_2^0=
\left(\begin{array}{ccc|ccc|ccc}
   -y&x &0& 0& 0& 0&-x& 0&0\\
    0&0 &x&-x& 0& 0&-z& 0&0\\
    0&0 &y& 0&-x& 0& 0&-z&x
\end{array}\right).
\]

For simplicity, we denote $\zeta_2^0$ as a $3\times 9$ block. Then $\zeta_2^1$ as a $9\times 18$ block and $\zeta_2^2$ as a $18 \times 30$ block are:
\[
\zeta_2^0
= \begin{tikzpicture}[scale=0.25]
\draw[pattern=north east lines, pattern color=black] (0,0) rectangle (9,-3);
\end{tikzpicture}
=
\begin{tikzpicture}[scale=0.25]
\draw[pattern=north east lines, pattern color=black] (0,0) rectangle (9,-3);
\draw (3,0) -- (3,-3);
\end{tikzpicture}, 
\qquad  
\zeta_2^1 = 
\begin{tikzpicture}[scale=0.25]
\draw (3,0) -- (3,-3);
\draw (0,0) rectangle (18,-9);
 \draw[pattern=north east lines, pattern color=black] (0,0) rectangle (9,-3);
 \draw[pattern=north east lines, pattern color=black] (3,-3) rectangle (6,-6);
 \draw (6,-3) rectangle (9,-6);
 \draw[pattern=north east lines, pattern color=black] (9,-3) rectangle (15,-6);
 \draw[pattern=north east lines, pattern color=black] (6,-6) rectangle (9,-9);
 \draw (9,-6) rectangle (12,-9);
 \draw[pattern=north east lines, pattern color=black] (12,-6) rectangle (18,-9);
\end{tikzpicture}, \]
\[  
\zeta_2^2 = 
\begin{tikzpicture}[scale=0.25]
\draw (3,0) -- (3,-3);
\draw (0,0) rectangle (30,-18);
\draw[pattern=north east lines, pattern color=black] (0,0) rectangle (9,-3);
 \draw[pattern=north east lines, pattern color=black] (3,-3) rectangle (6,-6);
 \draw (6,-3) rectangle (9,-6);
 \draw[pattern=north east lines, pattern color=black] (9,-3) rectangle (15,-6);
 \draw[pattern=north east lines, pattern color=black] (6,-6) rectangle (9,-9);
 \draw (9,-6) rectangle (12,-9);
 \draw[pattern=north east lines, pattern color=black] (12,-6) rectangle (18,-9);
 \draw[pattern=north east lines, pattern color=black] (9,-9) rectangle (12,-12);
 \draw (12,-9) rectangle (18,-12);
 \draw[pattern=north east lines, pattern color=black] (18,-9) rectangle (24,-12);
 \draw[pattern=north east lines, pattern color=black] (12,-12) rectangle (15,-15);
 \draw (15,-12) rectangle (21,-15);
 \draw[pattern=north east lines, pattern color=black] (21,-12) rectangle (27,-15);
 \draw[pattern=north east lines, pattern color=black] (15,-15) rectangle (18,-18);
 \draw (18,-15) rectangle (24,-18);
 \draw[pattern=north east lines, pattern color=black] (24,-15) rectangle (30,-18);
\end{tikzpicture}.
\]
The pattern for $\zeta_2^k$ continues this way, where the blank entries are filled with $0$; see Remark \ref{rem:spread}. 

For $u=3$, the columns of  
$\zeta_3^0\colon (R^{\oplus 3})^{\oplus 3} \to R$ 
are given by 
$
z_1\wedge e_{12},z_1\wedge e_{13},z_1\wedge e_{23}; \,
z_2\wedge e_{12},z_2\wedge e_{13},z_2\wedge e_{23};\,
z_3\wedge e_{12},z_3\wedge e_{13},z_3\wedge e_{23}.
$
Using the computations
\begin{align*}
  z_1\wedge e_{12} &= 0&   
  z_1\wedge e_{13} &= -ye_{123}&   
  z_1\wedge e_{23} &= xe_{123}  
  \\
  z_2\wedge e_{12} &= xe_{123}&   
  z_2\wedge e_{13} &= 0&   
  z_2\wedge e_{23} &= 0
  \\
  z_3\wedge e_{12} &= ze_{123}&   
  z_3\wedge e_{13} &= -xe_{123}&   
  z_3\wedge e_{23} &= 0,
\end{align*}
we get:
\[
\zeta_3^0=
\left(
\begin{array}{ccc|ccc|ccc}
   0&-y&x &x&0&0 &z&-x&0 
\end{array}\right).
\]
For simplicity, we denote $\zeta_3^0$ as a $1 \times 9$ block. Then $\zeta_3^1$ as a $3\times 18$ block and $\zeta_3^2$ as a $6 \times 30$ block are:
\[
\zeta_3^0 = \begin{tikzpicture}[scale=0.25]
\draw[pattern=crosshatch, pattern color= gray] (0,0) rectangle (9,-1);
\end{tikzpicture} 
= \begin{tikzpicture}[scale=0.25]
\draw (3,0) -- (3,-1);
\draw[pattern=crosshatch, pattern color= gray] (0,0) rectangle (9,-1);
\end{tikzpicture}, 
\qquad  
\zeta_3^1 = 
\begin{tikzpicture}[scale=0.25]
\draw (3,0) -- (3,-1);
\draw (0,0) rectangle (18,-3);
 \draw[pattern=crosshatch, pattern color= gray] (0,0) rectangle (9,-1);
 \draw[pattern=crosshatch, pattern color= gray] (3,-1) rectangle (6,-2);
 \draw (6,-1) rectangle (9,-2);
 \draw[pattern=crosshatch, pattern color= gray] (9,-1) rectangle (15,-2);
 \draw[pattern=crosshatch, pattern color= gray] (6,-2) rectangle (9,-3);
 \draw (9,-2) rectangle (12,-3);
 \draw[pattern=crosshatch, pattern color= gray] (12,-2) rectangle (18,-3);
\end{tikzpicture}, \]
\[
\zeta_3^2 = 
\begin{tikzpicture}[scale=0.25]
\draw (3,0) -- (3,-1);
\draw (0,0) rectangle (30,-6);
 \draw[pattern=crosshatch, pattern color= gray] (0,0) rectangle (9,-1);
 \draw[pattern=crosshatch, pattern color= gray] (3,-1) rectangle (6,-2);
 \draw (6,-1) rectangle (9,-2);
 \draw[pattern=crosshatch, pattern color= gray] (9,-1) rectangle (15,-2);
 \draw[pattern=crosshatch, pattern color= gray] (6,-2) rectangle (9,-3);
 \draw (9,-2) rectangle (12,-3);
 \draw[pattern=crosshatch, pattern color= gray] (12,-2) rectangle (18,-3);
 \draw[pattern=crosshatch, pattern color= gray] (9,-3) rectangle (12,-4);
 \draw (12,-3) rectangle (18,-4);
 \draw[pattern=crosshatch, pattern color= gray] (18,-3) rectangle (24,-4);
 \draw[pattern=crosshatch, pattern color= gray] (12,-4) rectangle (15,-5);
 \draw (15,-4) rectangle (21,-5);
 \draw[pattern=crosshatch, pattern color= gray] (21,-4) rectangle (27,-5);
 \draw[pattern=crosshatch, pattern color= gray] (15,-5) rectangle (18,-6);
 \draw (18,-5) rectangle (24,-6);
 \draw[pattern=crosshatch, pattern color= gray] (24,-5) rectangle (30,-6);
\end{tikzpicture}.
\]
The pattern for $\zeta_3^k$ continues this way, where the blank entries are filled with $0$; see Remark \ref{rem:spread}.

Now, we put together the above block maps and express the differential maps $\partial_i^F$ in the minimal free resolution $F$ of $\sk$ over $R$ as blocks, where the blank entries are filled with 0:
\begin{align*}
\partial^F_1 &=  \begin{tikzpicture}[scale=0.25]
\filldraw[draw=black,color=black!20!white] (0,0) rectangle (3,1);
\end{tikzpicture},
&
\partial^F_2&= 
\begin{tikzpicture}[scale=0.25]
\filldraw[draw=black,color=black!50!white] (0,0) rectangle (3,3);
\draw[pattern=dots, pattern color=black] (3,0) rectangle (6,3);
\end{tikzpicture},
\\
\partial^F_3&= \begin{tikzpicture}[scale=0.25]
\filldraw[draw=black,color=black!80!white] (0,0) rectangle (1,-3);
\draw[pattern=north east lines, pattern color=black] (1,0) rectangle (10,-3);
\draw(0,-3) rectangle (1,-6);
\draw(1,-3) rectangle (10,-6);
 \filldraw[draw=black,color=black!20!white] (1,-3) rectangle (4,-4);
 \filldraw[draw=black,color=black!20!white] (4,-4) rectangle (7,-5);
 \filldraw[draw=black,color=black!20!white] (7,-5) rectangle (10,-6);
\end{tikzpicture},
&
\partial^F_4&= \begin{tikzpicture}[scale=0.25]
\draw[pattern=crosshatch, pattern color= gray] (0,0) rectangle (9,-1);
\draw (10,-1)-- (10,-4);
\draw (9,0) rectangle (15,-1);
\draw (0,-1) rectangle (9,-10);
 \filldraw[draw=black,color=black!50!white] (0,-1) rectangle (3,-4);
 \filldraw[draw=black,color=black!50!white] (3,-4) rectangle (6,-7);
 \filldraw[draw=black,color=black!50!white] (6,-7) rectangle (9,-10);
\draw (9,-1) rectangle (15,-10);
 \draw[pattern=dots, pattern color=black] (9,-1) rectangle (12,-4);
 \draw[pattern=dots, pattern color=black] (10,-4) rectangle (11,-7); 
 \draw (11,-4) rectangle (12,-7);  \draw[pattern=dots, pattern color=black] (12,-4) rectangle (14,-7);
 \draw[pattern=dots, pattern color=black] (11,-7) rectangle (12,-10);
 \draw (12,-7) rectangle (15,-10);
 \draw[pattern=dots, pattern color=black] (13,-7) rectangle (15,-10);
\end{tikzpicture},
\\
\partial^F_5 &=
\begin{tikzpicture}[scale=0.25]
\draw(0,0) rectangle (3,-9);
 \filldraw[draw=black,color=black!80!white] (0,0) rectangle (1,-3);
 \filldraw[draw=black,color=black!80!white] (1,-3) rectangle (2,-6);
 \filldraw[draw=black,color=black!80!white] (2,-6) rectangle (3,-9);
\draw (3,0) rectangle (21,-9);
 \draw[pattern=north east lines, pattern color=black] (3,0) rectangle (12,-3);
 \draw (6,0) -- (6,-3);
 \draw[pattern=north east lines, pattern color=black] (6,-3) rectangle (9,-6);
\draw (9,-3) rectangle (12,-6);
 \draw[pattern=north east lines, pattern color=black] (12,-3) rectangle (18,-6);
 \draw[pattern=north east lines, pattern color=black] (9,-6) rectangle (12,-9);
\draw (12,-6) rectangle (15,-9);
 \draw[pattern=north east lines, pattern color=black] (15,-6) rectangle (21,-9);
\draw(0,-9) rectangle (3,-15); 
\draw(3,-9) rectangle (21,-15);
 \filldraw[draw=black,color=black!20!white] (3,-9) rectangle (6,-10);
 \filldraw[draw=black,color=black!20!white] (6,-10) rectangle (9,-11);
 \filldraw[draw=black,color=black!20!white] (9,-11) rectangle (12,-12);
 \filldraw[draw=black,color=black!20!white] (12,-12) rectangle (15,-13);
 \filldraw[draw=black,color=black!20!white] (15,-13) rectangle (18,-14);
 \filldraw[draw=black,color=black!20!white] (18,-14) rectangle (21,-15);
\end{tikzpicture},
&
\partial^F_6 &=
\begin{tikzpicture}[scale=0.25]
\draw (3,0)-- (3,-1);
\draw (19,-3)-- (19,-6);
\draw (0,0) rectangle (18,-3);
 \draw[pattern=crosshatch, pattern color= gray] (0,0) rectangle (9,-1);
 \draw[pattern=crosshatch, pattern color= gray] (3,-1) rectangle (6,-2);
 \draw (6,-1) rectangle (9,-2);
 \draw[pattern=crosshatch, pattern color= gray] (9,-1) rectangle (15,-2);
 \draw[pattern=crosshatch, pattern color= gray] (6,-2) rectangle (9,-3);
 \draw (9,-2) rectangle (12,-3);
 \draw[pattern=crosshatch, pattern color= gray] (12,-2) rectangle (18,-3);
\draw(18,0) rectangle (28,-3); \draw(0,-3) rectangle (18,-21);
  \filldraw[draw=black,color=black!50!white] (0,-3) rectangle (3,-6);
  \filldraw[draw=black,color=black!50!white] (3,-6) rectangle (6,-9);
  \filldraw[draw=black,color=black!50!white] (6,-9) rectangle (9,-12);
  \filldraw[draw=black,color=black!50!white] (9,-12) rectangle (12,-15);
  \filldraw[draw=black,color=black!50!white] (12,-15) rectangle (15,-18);
  \filldraw[draw=black,color=black!50!white] (15,-18) rectangle (18,-21);
\draw (18,-3) rectangle (28,-21);
 \draw[pattern=dots, pattern color=black] (18,-3) rectangle (21,-6);
 \draw[pattern=dots, pattern color=black] (19,-6) rectangle (20,-9);
 \draw (20,-6) rectangle (21,-9);
 \draw[pattern=dots, pattern color=black] (21,-6) rectangle (23,-9);
 \draw[pattern=dots, pattern color=black] (20,-9) rectangle (21,-12);
 \draw (21,-9) rectangle (22,-12);
 \draw[pattern=dots, pattern color=black] (22,-9) rectangle (24,-12);
 \draw[pattern=dots, pattern color=black] (21,-12) rectangle (22,-15);
 \draw (22,-12) rectangle (24,-15);
 \draw[pattern=dots, pattern color=black] (24,-12) rectangle (26,-15); 
 \draw[pattern=dots, pattern color=black] (22,-15) rectangle (23,-18);
 \draw (23,-15) rectangle (25,-18);
 \draw[pattern=dots, pattern color=black] (25,-15) rectangle (27,-18);
 \draw[pattern=dots, pattern color=black] (23,-18) rectangle (24,-21);
 \draw (24,-18) rectangle (26,-21);
 \draw[pattern=dots, pattern color=black] (26,-18) rectangle (28,-21);
\end{tikzpicture}.
\end{align*}
\end{example}

Observe that for codepth 3, the differential maps $\partial^F_i$ for $i\geq 3$ also can be built from two sets of matrices: $\{\partial_1,\partial_2,\partial_3\}$ which is independent of the ideal $I$, and $\{\zeta_1^0, \zeta_2^0,\zeta_3^0\}$. The number of blocks and the spreading are different from the above  Example~\ref{exp:codepth2 edim3} for codepth 2, as follows:
\begin{itemize}
    \item[--] When $i=2k+3$, we need $\partial_1$, $\partial_3$, and $\zeta_2^0$ as building blocks: $\partial_1$ appears diagonally $k+3\choose 2$ times, $\partial_3$ appears diagonally $k+2\choose 2$ times, and $\zeta_2^k$ is built from $\zeta_2^0$ $k+2\choose 2$ times by spreading it, as described above.
    \item[--] When $i=2k+4$, we need $\partial_2$, $\zeta_1^0$, and $\zeta_3^0$ as building blocks, with $\partial_2$ appears diagonally $k+3\choose 2$ times, $\zeta_1^k$ is built from $\zeta_1^0$ $k+3\choose 2$ times by spreading it, and $\zeta_3^k$ is built from $\zeta_3^0$ $k+2\choose 2$ times by spreading it, as described above. 
\end{itemize}

\begin{remark} As in Examples \ref{exp:codepth2 edim3} and \ref{exp:codepth3 edim3} one can obtain $\zeta_u^k$ from $\zeta_u^0$ just by a splitting-and-spreading process. Therefore, by Theorem~\ref{ci codepth c}, the differential maps $\partial^F$ in the minimal free resolution of $\sk$ over $R$ can be obtained explicitly by using only $2n$ matrices, where $n$ is the embedding dimension of $R$:
\[
\partial_1^K,\dots, \partial_{n}^K \qquad \text{and} \qquad
\zeta^0_1,\dots,\zeta^0_{n}.
\]
\end{remark}

\appendix
\section{}
\label{appendix}


Here we describe the connection between our work and the Tate resolution given in \cite{T}. As before, let $(R,\fm,\sk)$ be a complete intersection ring of codepth $c \geq 1$ and $(K,\partial)$ be the Koszul complex on a minimal set of generators of $\fm$. Let $V$ be a free $R$-module of rank $c$ with a basis $\{v_i\}_{1 \leq i \leq c}$ and $D^\bullet(V)$ denote the divided power functor on $V$. Recall that $D^\bullet(V)$ is a module over the symmetric algebra $S_\bullet (V^*)$ of the dual module $V^*\colon=\Hom{R}{V}{R}$ of $V$. Let $z_1, \ldots, z_c \in K_1$ be representatives of a basis of $\HH{1}{K}$.  By adjoining variables, the acyclic closure of the residue field $\sk$ over $R$
 as an algebra may be described as the tensor product $K \otimes_R D^\bullet(V)$ with differential maps given by 
 \[z\otimes w\mapsto \partial(z)\otimes w + \sum_{i=1}^c (z_i\wedge z)\otimes (v_i^*\cdot w),\]
where $z \in K$, $w \in D^\bullet(V)$, and $v_i^*$ denotes a basis element of $V^*$ dual to $v_i$. 

With this setting, a weakly increasing tuple $(u_1,\dots,u_k)$, with $1 \leq u_1 \leq\cdots \leq u_k \leq c$, can be put in a bijection with a basis element of $D^k(V)$ as follows:
\[(u_1,u_2,\dots,u_k)=(\underbrace{i_1\dots i_1}_{j_1 \text{ times }}\underbrace{i_2\dots i_2}_{j_2 \text{ times }}\dots \underbrace{i_\ell\dots i_\ell}_{j_\ell \text{ times }}) \longleftrightarrow v_{i_1}^{(j_1)}v_{i_2}^{(j_2)}\dots v_{i_\ell}^{(j_\ell)},\]
where $1\leq i_1<\dots<i_\ell\leq c$ and $k=j_1+\dots+j_\ell.$
For example, $1112334 \leftrightarrow v_1^{(3)}v_2v_3^{(2)}v_4$. Thus, using the above correspondence, there is a bijection between $K^{\oplus{k+c-1 \choose c-1}}$ and $K \otimes_R D^k(V)$ given by
\[ \pi=(\pi_{u_1\dots u_k})_{1\leq u_1\leq\dots\leq u_k\leq c}\mapsto \sum_{1\leq u_1\leq\dots\leq u_k\leq c}\pi_{u_1\dots u_k}\otimes  v_{i_1}^{(j_1)}v_{i_2}^{(j_2)}\dots v_{i_\ell}^{(j_\ell)}.\] 

Our Lemma \ref{zeta chain c} can be reformulated as follows.
\begin{lemma} 
\label{mu chain c}
Let $\mu^k \colon K \otimes_R D^{k+1}(V) \to K \otimes_R D^{k}(V)$ denote the map given by left multiplication by the element $\sum_{i=1}^c z_i \otimes v_i^*$. 
Then, for all $k\geq 0$, $\mu^k$ is a chain map and $\mu^k \circ \mu^{k+1}=0$.
\end{lemma}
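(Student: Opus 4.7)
The plan is to recognize that, under the bijection between $K^{\oplus{k+c-1\choose c-1}}$ and $K \otimes_R D^k(V)$ described just above the lemma, the map $\mu^k$ corresponds precisely to the map $\zeta^k$ of Lemma~\ref{zeta chain c} (up to signs arising from the shift $\Sigma$). Once this correspondence is established, both assertions of the lemma follow immediately: the chain map property from part (a) of Lemma~\ref{zeta chain c}, and the vanishing $\mu^k \circ \mu^{k+1} = 0$ from part (b).

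To carry this out, I would compute $\mu^k$ explicitly in terms of the divided-power basis. Writing a typical element of $K \otimes_R D^{k+1}(V)$ as $\sum \pi_{u_1\ldots u_{k+1}} \otimes w_{u_1\ldots u_{k+1}}$, where $w_{u_1\ldots u_{k+1}}$ denotes the divided-power monomial attached to the weakly increasing tuple $(u_1,\ldots,u_{k+1})$, the definition of $\mu^k$ gives
\[
\mu^k\Big(\sum \pi_{u_1\ldots u_{k+1}} \otimes w_{u_1\ldots u_{k+1}}\Big) = \sum_{j=1}^c \sum_{u_1\leq\cdots\leq u_{k+1}} (z_j \wedge \pi_{u_1\ldots u_{k+1}}) \otimes (v_j^* \cdot w_{u_1\ldots u_{k+1}}).
\]
Using the standard action $v_j^* \cdot v_j^{(m)} = v_j^{(m-1)}$ (and that $v_j^*$ annihilates divided powers of the other generators), I would regroup this sum according to the output monomial $w_{v_1\ldots v_k}$. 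The tuples $(u_1,\ldots,u_{k+1})$ contributing to $w_{v_1\ldots v_k}$ are exactly $[v_1\ldots v_k j]$ for $j=1,\ldots,c$, each contributing with coefficient $1$, so the coefficient of $w_{v_1\ldots v_k}$ in $\mu^k(\pi)$ is $\sum_{j=1}^c z_j \wedge \pi_{[v_1\ldots v_k j]}$, which is precisely $\zeta^k(\pi)_{v_1\ldots v_k}$.

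The main obstacle is keeping track of the combinatorics of the divided-power action, namely verifying that $v_j^*$ acting on $w_{[v_1\ldots v_k j]}$ produces $w_{v_1\ldots v_k}$ with coefficient exactly $1$ regardless of how many times $j$ already appears in $(v_1,\ldots,v_k)$; this is the point at which the divided-power structure (as opposed to the symmetric-power structure) is essential. With this identification in hand, both claims reduce to Lemma~\ref{zeta chain c}. Alternatively, the lemma can be derived conceptually from $d^2 = 0$ for the Tate differential $d = \partial\otimes 1 + \mu$ on $K\otimes_R D^\bullet(V)$: since $(\partial\otimes 1)^2 = 0$ and $\partial(z_i)=0$, the cross terms $(\partial\otimes 1)\mu + \mu(\partial\otimes 1)$ vanish (so $\mu$ is a chain map), and the remaining piece of $d^2$ forces $\mu^2 = 0$.
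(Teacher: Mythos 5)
Your proposal is correct and takes precisely the route the paper intends: the paper states Lemma~\ref{mu chain c} as a reformulation of Lemma~\ref{zeta chain c} under the bijection $K^{\oplus{k+c-1\choose c-1}} \leftrightarrow K\otimes_R D^k(V)$ and gives no separate argument, and your explicit check that $v_j^*\cdot w_{[v_1\ldots v_k\, j]} = w_{v_1\ldots v_k}$ with coefficient exactly $1$ (the place where divided, rather than symmetric, powers are essential) is exactly the verification that $\mu^k$ corresponds to $\zeta^k$, after which both assertions follow from Lemma~\ref{zeta chain c}. One small slip in your alternative sketch: the vanishing of the cross terms $(\partial\otimes 1)\mu + \mu(\partial\otimes 1)$ is not a consequence of $(\partial\otimes 1)^2 = 0$ but of the Leibniz rule in $K$ together with $\partial(z_i)=0$ (left multiplication by a degree-one cycle anticommutes with $\partial$); once that and $(\partial\otimes 1)^2=0$ are known directly, separating $d^2$ by $D^\bullet(V)$-degree does force $\mu^2=0$, though this presupposes $d^2=0$ from the Tate construction, so it is less self-contained than the translation argument.
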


Under this translation, Proposition \ref{prop:zeta c} becomes the following result. For completeness, we sketch some ideas for a different proof using the language of divided powers.
\begin{proposition}
Let $(R,\fm,\sk)$ be a complete intersection local ring of codepth $c \geq 1$. Let $A=A_0\oplus A_1\oplus \cdots \oplus A_c$ be the Koszul homology and $V$ be a free $R$-module of rank $c$. Then for each integer $k\geq 0$, the following sequence is exact
\begin{equation}
\label{ses mu codepth c}
    0\to A_0\otimes_R D^{k+1}(V)\xrightarrow{[\mu^{k}_1]}A_1 \otimes_R D^{k}(V) \xrightarrow{[\mu^{k-1}_2]}A_2 \otimes_R D^{k-1}(V) \to \cdots \xrightarrow{[\mu^{k-c+1}_{c}]}A_c \otimes_R D^{k+1-c}(V)\to  0,
\end{equation}
where the maps $[\mu^{k}]$ are given by left multiplication by $\sum_{i=1}^c [z_i] \otimes v_i^*$. 
\end{proposition}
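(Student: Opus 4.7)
The plan is to identify \eqref{ses mu codepth c} as the fixed-total-degree piece of a universal Koszul-style complex built from the exterior and divided power algebras of a rank-$c$ free module, exhibit a contracting homotopy that handles the generic case, and then address the characteristic-dependent gap by an integral argument.

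First I would observe that, under the identification $A_1\cong V$ sending $[z_i]\leftrightarrow v_i$, the bigraded object $A\otimes_R D^\bullet(V)$ carries the differential $\mu$ of bidegree $(+1,-1)$, and the displayed sequence \eqref{ses mu codepth c} is precisely its total-degree-$(k+1)$ piece, where the total degree on $A_u\otimes D^j(V)$ is $u+j$. Since $A\cong\bigwedge^\bullet A_1$ is a free exterior $\sk$-algebra (Tate) and each $D^j(V)$ is $R$-free of rank $\binom{j+c-1}{c-1}$, the claim reduces to the acyclicity, in every positive total degree, of the universal complex
\[
\bigl(\textstyle\bigwedge^\bullet V\otimes D^\bullet(V),\,\mu\bigr),\qquad \mu(a\otimes w)=\sum_{i=1}^c v_i\wedge a\otimes v_i^*\cdot w.
\]

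Next I would introduce the $R$-linear contracting map
\[
s(a\otimes w)=\sum_{i=1}^c\iota_{v_i^*}(a)\otimes v_i\cdot w,
\]
where $\iota_{v_i^*}$ denotes interior multiplication on $\bigwedge^\bullet V$. Using that $\iota_{v_i^*}$ is an antiderivation on $\bigwedge^\bullet V$, that $v_j^*$ is a derivation on $D^\bullet(V)$, and the commutation $v_j^*\cdot(v_i\cdot w)=\delta_{ij}w+v_i\cdot(v_j^*\cdot w)$, a direct expansion yields
\[
(\mu s+s\mu)(a\otimes w)=\Bigl(\sum_i v_i\wedge\iota_{v_i^*}(a)\Bigr)\otimes w+a\otimes\Bigl(\sum_i v_i\cdot v_i^*\cdot w\Bigr)=(u+k)\,a\otimes w,
\]
the two summands being the Euler operators on $\bigwedge^\bullet V$ and $D^\bullet(V)$ respectively. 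Restricted to the total-degree-$(k+1)$ subcomplex, this reads $\mu s+s\mu=(k+1)\,\mathrm{id}$, so exactness follows whenever $k+1$ is a unit in $R$.

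The main obstacle is the case where $k+1$ fails to be invertible, e.g.\ in characteristic dividing $k+1$. To close this, I would argue universally over $\mathbb{Z}$: the total-degree-$(k+1)$ complex is a bounded complex of finite free $\mathbb{Z}$-modules, its Euler characteristic vanishes by the binomial identity $\sum_u(-1)^u\binom{c}{u}\binom{k+c-u}{c-1}=0$, and the homotopy shows its homology is annihilated by $k+1$. To upgrade this to acyclicity, I would induct on $c$: writing $V=V'\oplus Re_c$ produces a tensor factorization $\bigwedge^\bullet V\otimes D^\bullet V\cong(\bigwedge^\bullet V'\otimes D^\bullet V')\otimes(\bigwedge^\bullet Re_c\otimes D^\bullet Re_c)$ compatible with $\mu$, and Künneth (applicable because every module in sight is $\mathbb{Z}$-free) reduces to the rank-one case, where the differential $D^{k+1}(Re_c)\to Re_c\otimes D^k(Re_c)$ sends $e_c^{(k+1)}\mapsto e_c\otimes e_c^{(k)}$ and is visibly an isomorphism. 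Integral acyclicity then base-changes to any commutative $R$, which together with Step~1 establishes \eqref{ses mu codepth c} in full generality.
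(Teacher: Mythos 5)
Your proof is correct and takes a genuinely different route to acyclicity than the paper's appendix sketch. Both you and the paper reduce the statement to the exactness (in each positive total degree) of the universal complex $\bigl(\bigwedge^\bullet V\otimes D^\bullet V,\,\mu\bigr)$ over $R$, but the two arguments for that acyclicity differ. The paper observes that the relevant strand is, after $R$-dualizing, the internal degree $k+1$ graded piece of the Koszul complex over $S_\bullet(V^*)$, which resolves $R$; exactness is then immediate from a known fact, no computation required. You instead exhibit the explicit Cartan-type homotopy $s(a\otimes w)=\sum_i\iota_{v_i^*}(a)\otimes v_i\cdot w$ and verify $\mu s+s\mu=(k+1)\cdot\mathrm{id}$ on the total-degree-$(k+1)$ piece (note the minor typo: the scalar in your middle display should be $u+j$, not $u+k$, which equals $k+1$ on the relevant strand), then patch the characteristic-dividing-$(k+1)$ case by an integral K\"unneth induction on $c$ down to the transparent rank-one case. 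The trade-off is roughly: the paper's route is shorter and leans on a standard theorem, while yours is self-contained and elementary but costs an extra step because the homotopy alone does not see torsion phenomena. One small point to tighten in a write-up: the K\"unneth induction requires checking that under $V=V'\oplus Re_c$ the differential $\mu$ really decomposes as $\mu'\otimes 1\pm 1\otimes\mu_c$ with the Koszul sign coming from commuting $v_c$ past $\bigwedge^\bullet V'$; this works, but it should be stated rather than left implicit, since it is what licenses the K\"unneth formula on the bigraded complex.
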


\begin{proof}[Idea of proof]
Since $R$ is a complete intersection ring, $A \cong \bigwedge^\bullet A_1$ and there is an isomorphism $A_1 \cong \sk \otimes_R V$ via the identification $[z_i] \leftrightarrow 1\otimes v_i$. Thus, the complex \eqref{ses mu codepth c} is isomorphic to $\sk \otimes_R -$ applied to the complex of $R$-modules:
\[0 \to D^{k+1}(V) \to V \otimes_R D^k(V) \to \cdots \to \bigwedge^c V \otimes_R D^{k+1-c}(V) \to 0. \]
Taking the $R$-dual, this is equivalently the degree $k+1$ strand (c.f. \cite[A2.6.1]{Ei}) of the Koszul complex over the symmetric $R$-algebra $S_\bullet (V^*)$, which is exact since the Koszul complex over $S_\bullet (V^*)$ is a free resolution of $R$. 
\end{proof}

One could continue with this translation to get an alternate proof that $K \otimes_R D^\bullet(V)$ is a resolution of the residue field $\sk$.

\section*{Acknowledgements} The authors thank the referee for useful suggestions to improve the paper. Nguyen was partially supported by the Naval Academy Research Council and NSF grant DMS-2201146.


\end{document}